\theoremstyle{plain}
\newtheorem{propn}{Proposition}[section]
\newtheorem{thm}[propn]{Theorem}
\newtheorem{lemma}[propn]{Lemma}
\newtheorem{cor}[propn]{Corollary}
\theoremstyle{definition}
\newtheorem*{defn}{Definition}
\theoremstyle{remark}
\newtheorem*{rem}{Remark}
\newtheorem*{rems}{Remarks}
\newcommand{\bm}[1]{\mbox{\boldmath{$#1$}}}
\newcommand{\ve}{\varepsilon}
\newcommand{\Hil}{\mathsf{H}}
\newcommand{\hil}{\mathsf{h}}
\newcommand{\Kil}{\mathsf{K}}
\newcommand{\kil}{\mathsf{k}}
\newcommand{\YVil}{\mathsf{Y}} 
\newcommand{\Vil}{\mathsf{V}}
\newcommand{\Uil}{\mathsf{U}}
\newcommand{\Wil}{\mathsf{W}}
\newcommand{\Xil}{\mathsf{X}}
\newcommand{\Yil}{\mathsf{Y}}
\newcommand{\init}{\mathfrak{h}}
\newcommand{\noise}{\mathsf{k}}
\newcommand{\otM}{\otimes_{\Mat}}
\newcommand{\fullcomp}{\bullet}
\newcommand{\YMotBF}{\Yil \otM B(\FFock)}
\newcommand{\VMotBk}{\Vil \otM B(\kilhat)}
\newcommand{\YMotF}{\Yil \otM |\FFock\ra}
\newcommand{\VMotF}{\Vil \otM |\FFock\ra}
\newcommand{\YMotk}{\Yil \otM |\kilhat\ra}
\newcommand{\VMotk}{\Vil \otM |\kilhat\ra}
\newcommand{\YVMotk}{\YVil \otM |\kilhat\ra} 
\newcommand{\YMotkn}{\Yil \otM |\kilhat^{\ot n}\ra}
\newcommand{\YVMotF}{\YVil \otM |\FFock\ra} 
\newcommand{\Domain}{\mathfrak{D}}
\newcommand{\Clg}{\mathcal{C}}
\newcommand{\Ddense}{\mathcal{D}}
\newcommand{\Ddensestar}{\mathcal{D}_*}
\newcommand{\againDdense}{\mathcal{D}'}
\newcommand{\Exps}{\mathcal{E}}
\newcommand{\FFock}{\mathcal{F}}
\newcommand{\Op}{\mathcal{O}}
\newcommand{\Opdagger}{\mathcal{O}^\ddagger}
\newcommand{\ExpsD}{\mathcal{E}_D}
\newcommand{\Proc}{\mathbb{P}}
\newcommand{\Procdagger}{\mathbb{P}^\ddagger}
\newcommand{\Procwc}{\mathbb{P}_{\mathrm{wc}}}
\newcommand{\Procwr}{\mathbb{P}_{\mathrm{wr}}}
\newcommand{\Procphiwr}{\mathbb{P}_{\phi\mathrm{wr}}}
\newcommand{\Step}{\mathbb{S}}
\newcommand{\StepD}{\mathbb{S}_D}
\newcommand{\againStepD}{\mathbb{S}_{D'}}
\newcommand{\QSC}{\mathbb{QSC}}
\newcommand{\QSCddagger}{\QSC^\ddagger}
\newcommand{\QSCddaggerHc}{\QSCddagger_{\mathrm{Hc}}}
\newcommand{\QSCHc}{\QSC_{\mathrm{Hc}}}
\newcommand{\hED}{\init\ulot\ExpsD}
\newcommand{\againhED}{\init'\ulot\Exps_{D'}}
\newcommand{\counit}{\epsilon}
\newcommand{\Real}{\mathbb{R}}
\newcommand{\Rplus}{\Real_+}
\newcommand{\Comp}{\mathbb{C}}
\newcommand{\Nat}{\mathbb{N}}
\newcommand{\ZZ}{\mathbb{Z}}
\newcommand{\ZZplus}{\mathbb{Z}_+}
\newcommand{\up}{\upsilon}
\newcommand{\Mat}{\mathrm{M}}
\newcommand{\Col}{\mathrm{C}} 
\newcommand{\kB}{\kil{\text{-}B}} 
\newcommand{\cb}{{\text{\tu{cb}}}}
\newcommand{\bd}{{\text{\tu{b}}}}
\newcommand{\wh}{\widehat}
\newcommand{\wt}{\widetilde}
\newcommand{\ol}{\overline}
\newcommand{\ot}{\otimes}
\newcommand{\ulot}{\odot} 
\newcommand{\olot}{\overline{\otimes}}
\newcommand{\spot}{\ot_{\text{\tu{sp}}}}
\newcommand{\uwot}{\olot} 
\newcommand{\la}{\langle}
\newcommand{\ra}{\rangle}
\newcommand{\chat}{\wh{c}}
\newcommand{\Dhat}{\wh{D}}
\newcommand{\fhat}{\wh{f}}
\newcommand{\ghat}{\wh{g}}
\newcommand{\kilhat}{\wh{\kil}}
\newcommand{\tu}{\textup}
\DeclareMathOperator{\Dom}{Dom}
\DeclareMathOperator{\Ran}{Ran}
\DeclareMathOperator{\Lin}{Lin}
\DeclareMathOperator{\id}{id}
\newenvironment{alist}
{

\begin{enumerate}}
{\end{enumerate}}
\newenvironment{rlist}
{

\begin{enumerate}}
{\end{enumerate}}
\numberwithin{equation}{section}
\begin{document}

\title
[Quantum stochastic differential equations]
{On quantum stochastic differential equations}

\author[Lindsay]{J.\ Martin Lindsay}
\address{Department of Mathematics and Statistics, Lancaster University, 
Lancaster LA1 4YF}
\email{j.m.lindsay@lancaster.ac.uk}
\author[Skalski]{Adam G.\ Skalski}
\address{School of Mathematical Sciences, University of Nottingham NG7 2RD}
\footnote{\emph{Permanent address of AGS}.
Department of Mathematics, University of \L\'{o}d\'{z}, ul.
Banacha 22, 90-238 \L\'{o}d\'{z}, Poland.

AGS acknowledges the support of the Polish KBN Research Grant 2P03A 03024 
and EU Research Training Network HPRN-CT-2002-00279}
\email{adam.skalski@maths.nottingham.ac.uk}

\keywords{Noncommutative probability, quantum stochastic,
operator space, stochastic cocycle, L\'evy process, coalgebra}

\subjclass[2000]{Primary 81S25; Secondary 46 L53}

\begin{abstract}
Existence and uniqueness theorems for quantum stochastic differential equations
with nontrivial initial conditions are proved for coefficients with
completely bounded columns. Applications are given for the case of
finite-dimensional initial space or, more generally, for coefficients 
satisfying a finite localisability condition. Necessary and sufficient 
conditions are obtained for a conjugate pair of quantum stochastic 
cocycles on a finite-dimensional operator space to strongly satisfy such 
quantum stochastic differential equations. This gives an alternative 
approach to quantum stochastic convolution cocycles on a coalgebra. 
\end{abstract}

\maketitle

\section*{Introduction}
The investigation of quantum stochastic differential equations (QSDE) for 
processes
acting on symmetric Fock spaces dates back to Hudson and Parthasarathy's
founding paper of quantum stochastic calculus
(\cite{RobinPartha}).
As usual in stochastic analysis, these equations are understood as 
integral equations.
By a weak solution is meant a process, consisting of operators (or
mappings), whose matrix elements satisfy certain ordinary integral
equations. Quantum stochastic analysis also harbours a notion of strong 
solution.
The first existence and uniqueness theorems (\cite{RobinPartha}) dealt with
the constant-coefficient operator QSDE with finite-dimensional noise
space; these were soon extended to the mapping QSDE by Evans and Hudson
(\cite{Evans}). Further extensions to the case of infinite-dimensional
noise were obtained in \cite{RobinPartha2}, \cite{MohariSinha} and 
\cite{Fagnola}, and clarified in \cite{Meyer} and \cite{LWptrf}. 
Solutions of such
QSDE's yield quantum stochastic, or Markovian, cocycles (\cite{Accardi}).
The converse is also true under various hypotheses (\cite{HuL}, 
\cite{Bradshaw}); in \cite{LWjfa} it was proved that 
any sufficiently regular cocycle on a $C^*$-algebra satisfies some QSDE
weakly, and moreover if the cocycle is also completely positive and 
contractive, then
it satisfies the equation strongly. In \cite{LWblms} complete boundedness 
of the `columns' of the coefficient was identified as a sufficient 
condition for the 
solution to be strong. (When the noise dimension space is finite 
dimensional boundedness suffices.)
In all the above cases the initial condition for the QSDE
was given by an identity map ampliated to the Fock space.

Parallel to the theory of quantum stochastic cocycles,
Sch\"urmann developed a theory of quantum
L\'evy processes on quantum groups, or more generally $*$-bialgebras,
(see \cite{schu} and references therein).
He showed that each quantum L\'evy process satisfies a QSDE of a certain
type, with initial condition given by the counit of the underlying
$*$-bialgebra (see~\eqref{coalg QSDE} below).
The notion of quantum L\'evy process was recently generalised to quantum 
stochastic \emph{convolution} cocycle on a coalgebra in \cite{LSaihp} 
where it 
was shown that such objects arise as solutions of coalgebraic quantum 
stochastic differential equations. Extension of the results
of that paper to the context of compact quantum groups, or more generally 
operator space coalgebras (\cite{LSqscc2}), was our motivation for 
analysing quantum stochastic differential equations on an operator 
space with nontrivial initial conditions.
Results obtained here have also enabled the development of a dilation 
theory for completely positive convolution cocycles on a $C^*$-bialgebra 
(\cite{Sdilations}).

The aim of this paper is to provide existence and uniqueness results
for a class of quantum stochastic differential equations, under 
natural conditions, together with cocycle characterisation of solutions, 
The crucial role played by complete boundedness (\cite{LWblms}) suggests 
that the main object for consideration as initial space should be an 
operator space. In general operator space theory is very useful for 
describing properties of coefficients, initial conditions and
solutions of our equations (cf.\ \cite{LWCBonOS}).
The main existence theorem is proved for coefficients
with  $\kil$-bounded columns and initial condition given by
a $\kil$-bounded map, where $\kil$ is the `noise dimension space'.
(The term $\kil$-\emph{bounded} means simply bounded if
$\kil$ is finite dimensional and completely bounded otherwise). 
Solutions are expressed in terms of iterated quantum stochastic integrals 
(cf.\ \cite{LWjlms}) and have $\kil$-bounded columns themselves 
(completely bounded columns if the coefficient has cb-columns
and the initial condition is completely bounded).
Due to our choice of test vectors (exponentials of step-functions with 
values in a given dense subspace of the noise dimension space) the results 
are explicitly basis-independent.
As solutions of equations of the type considered are quantum stochastic
cocycles, one may ask which cocycles satisfy a QSDE.
Sufficient conditions for the cocycle to satisfy a QSDE weakly,
established for the case of $C^*$-algebras in \cite{LWjfa}, remain valid
in the coordinate-free, operator space context of this paper.
A new result here, informed by our recent theorem on convolution 
cocycles (\cite{LSaihp}), is the characterisation of cocycles on finite 
dimensional operator spaces which, together with a conjugate process,
satisfy a QSDE strongly --- namely, they are the locally 
H\"older-continuous processes with exponent $1/2$ whose conjugate process 
enjoys the same continuity.

The plan of the paper is as follows. In Section~\ref{preliminaries sec}
the notation is established and basic operator-space theoretic and quantum 
stochastic notions are introduced. There also a concept of finite
localisability is discussed. Weak regularity is shown to be sufficient 
for uniqueness of weak solutions in Section~\ref{regularity sec} (cf.\ 
\cite{LWptrf}). Section~\ref{existence sec} contains the main result on 
the existence of strong solutions of equations on operator spaces and 
elucidates their dependence on initial conditions.
Although in the case of (algebraic) quantum L\'evy processes the initial 
object is a vector space $V$, rather than an operator space, the 
Fundamental Theorem on Coalgebras allows us to effectively work with 
finite-dimensional subspaces and thereby to circumvent the lack of 
analytic structure on $V$ (cf.\ \cite{schu}). 
For this purpose, the version of the existence theorem for finitely localisable
maps relevant for coalgebraic quantum stochastic differential equations
is given in Section~\ref{localisable sec}.
Section~\ref{cocycles sec} begins by recalling known facts on
relations between quantum stochastic cocycles and quantum stochastic
differential equations whose initial condition is given by the identity 
map on a (concrete) operator space. It then gives new necessary and 
sufficient conditions for a conjugate pair of cocycles on a 
finite-dimensional operator space to satisfy a QSDE strongly and ends with 
an application of this result to the infinitesimal generation of quantum 
stochastic convolution cocycles.

\subsection{Notation}
For dense subspaces $E$ and $E'$ of Hilbert spaces $\Hil$ and $\Hil'$,
$\Op (E;\Hil')$ denotes the space of operators $\Hil\to\Hil'$ with domain
$E$ and
$\Opdagger (E,E'):= \{ T\in\Op (E;\Hil'): \Dom T^*\supset E' \}$.
Thus $\Opdagger (E',E)$ is the conjugate space of
$\Opdagger (E,E')$
with conjugation $T\mapsto T^{\dagger}:=T^*|_{E'}$. 
When $\Hil'=\Hil$ we write $\Op(E)$ for $\Op(E;\Hil)$.
We view
$B(\Hil;\Hil')$ as a subspace of $\Opdagger (E,E')$ (via restriction/continuous linear
extension).
For vectors $\zeta\in E$ and $\zeta'\in \Hil'$,
$\omega_{\zeta',\zeta}$ denotes the linear functional on
$\Op (E;\Hil')$ given by
$T\mapsto \la\zeta', T\zeta\ra$, extending a standard notation.
We also use the Dirac-inspired notations
$|E\ra := \{|\zeta\ra : \zeta \in E\}$ and
$\la E| := \{\la\zeta | : \zeta \in E\}$
where $|\zeta\ra\in |\hil\ra := B(\Comp;\hil)$ and
$\la \zeta |\in \la\hil | := B(\hil;\Comp)$ are defined by
$\lambda \mapsto \lambda\zeta$ and $\zeta'\mapsto\la\zeta, \zeta'\ra$
respectively --- inner products (and all sesquilinear maps) here being 
linear in their \emph{second} argument.  

Tensor products of vector spaces, such as dense subspaces of Hilbert 
spaces, are denoted by $\ulot$;
minimal/spatial tensor products of operator spaces by $\spot$; 
and ultraweak tensor products of ultraweakly closed spaces of bounded 
operators  by $\uwot$.
The symbol $\ot$ is used for Hilbert space tensor products and tensor 
products of completely bounded maps between operator spaces; the symbol 
$\ulot$ is also used for the tensor product of unbounded operators, thus 
if $S\in\Op (E;\Hil')$ and $T\in\Op (F;\Kil')$ then
$S\ulot T\in\Op (E\ulot F;\Hil'\ot\Kil')$.
We also need ampliations of bra's and kets: for $\zeta\in\hil$ define
\begin{equation} \label{E-notation}
E^{\zeta}:= I_{\Hil}\ot\la\zeta | \in B(\Hil\ot\hil;\Hil) \text{ and }
E_{\zeta}:= I_{\Hil}\ot |\zeta\ra \in B(\Hil;\Hil\ot\hil),
\end{equation}
where the Hilbert space $\Hil$ is determined by context.  

For a vector-valued function $f$ on $\Rplus$ and subinterval $I$ of
$\Rplus$ $f_I$ denotes the function on $\Rplus$ which agrees with $f$ on
$I$ and vanishes outside $I$. Similarly, for a vector $\xi$, $\xi_I$ is
defined by viewing $\xi$ as a constant function. This extends the standard
indicator function notation. 
The symmetric measure space over the Lebesgue measure space $\Rplus$
(\cite{Guichardet})
is denoted $\Gamma$, with integration denoted
$\int_{\Gamma} \cdots d\sigma$, thus
$\Gamma = \{\sigma\subset\Rplus : \# \sigma < \infty\} =
\bigcup_{n\geq 0} \Gamma^n$ where
$\Gamma^n = \{\sigma\subset\Rplus : \# \sigma =n \}$
and $\emptyset$ is an atom having unit measure.
If $\Rplus$ is replaced by a subinterval $I$ then we write $\Gamma_I$ and
$\Gamma^n_I$,
thus the measure of $\Gamma^n_I$ is $|I|^n/n!$ where $|I|$ is the Lebesgue
measure of $I$.
Finally,
we write $X \subset \subset Y$ to mean that $X$ is a finite subset of $Y$.

\section{Preliminaries} 
\label{preliminaries sec}

\subsection{Quantum stochastics
(\cite{Partha}{\rm ,} \cite{Meyer}{\rm ; we follow} \cite{Llnm})
}
\emph{Fix now, and for the rest of the paper}, a complex Hilbert space
$\kil$ which we refer to as the noise dimension space,
and let $\kilhat$ denote the orthogonal sum $\Comp\oplus\kil$. Whenever $c\in \kil$,
$\chat:=\binom{1}{c}\in \kilhat$; for $E\subset\kil$, 
$\widehat{E}:= \Lin\{\chat:c \in E\}$ and when $g$ is a function with 
values in $\kil$, $\ghat$ denotes the corresponding function with values 
in $\kilhat$ defined by $\ghat(s):= \widehat{g(s)}$.
Let $\FFock$ denote the symmetric Fock space over $L^2(\Rplus;\kil)$.
For any dense subspace $D$ of $\kil$ let
$\Step_D$ denote the linear span of $\{d_{[0,t[}: d\in D, t\in\Rplus\}$
in $L^2(\Rplus;\kil)$ (we always take these right-continuous versions) and
let $\ExpsD$ denote the linear span of $\{\ve(g): g\in\Step_D\}$ in
$\FFock$, where $\ve(g)$ denotes the exponential vector
$\big((n!)^{-\frac{1}{2}}g^{\ot n}\big)_{n\geq 0}$.
The subscript $D$ is dropped when $D=\kil$.
An \emph{exponential domain} is a dense subspace of
$\init\ot\FFock$,
for a Hilbert space $\init$, of the form
$\Domain\ulot\ExpsD$.
We usually drop the tensor symbol and denote
simple tensors such as $v\ot\ve(f)$ by $v\ve(f)$.

For an exponential domain $\Ddense =
\Domain\ulot\ExpsD\subset\init\ot\FFock$ and Hilbert space $\init'$,
$\Proc(\Ddense;\init\ot\FFock)$ denotes the space of (equivalence classes
of) weakly measurable and adapted functions
$X: \Rplus \to \Op (\Ddense;\init'\ot\FFock)$:
\[
t\mapsto \la\xi',X_t\xi\ra \text{ is measurable }
\quad
(\xi'\in\init\ot\FFock, \xi\in\Ddense);
\]
\[
\la u'\ve(g'),X_t u\ve(g)\ra =
\la u'\ve(g'_{[0,t[}),X_t u\ve(g_{[0,t[})\ra
\la u'\ve(g'_{[t,\infty[}), u\ve(g_{[t,\infty[})\ra
\]
($u\in\Domain, g\in\StepD, u'\in\init', g'\in\Step, t\in\Rplus$),
with processes $X$ and $X'$ being identified if, for all $\xi\in\Ddense$,
$X_t\xi = X'_t\xi$ for almost all $t\in\Rplus$.
If $\againDdense$ is an exponential domain in $\init'\ot\FFock$ then
$\Procdagger(\Ddense, \againDdense)$ denotes the space of
$\Opdagger(\Ddense, \againDdense)$-valued processes.
Thus $\Procdagger(\againDdense, \Ddense)$ is the conjugate space of
$\Procdagger(\Ddense, \againDdense)$ with conjugation defined pointwise:
$X_t^{\dagger} = (X_t)^*|_{\againDdense}$.

Let $F\in\Proc(\Domain\ulot\Dhat\ulot\ExpsD;\init'\ot\kilhat\ot\FFock)$
be quantum stochastically integrable (\cite{Llnm}). Then the process
$(X_t = \int_0^tF_s d\Lambda_s)_{t\geq0}\in 
\Proc(\Domain \ulot \ExpsD;\init'\ot\FFock)$
satisfies
\begin{equation}\label{FF1}
\la v'\ve(g'), X_t v\ve(g)\ra =
\int_0^t ds\
\la v' \wh{g'}(s)\ve(g'), F_s v \wh{g}(s)\ve(g)\ra
\end{equation}
\begin{equation}\label{FE}
\| X_t v\ve(g)\|^2 \leq
C(g,t)^2 \int_0^t ds\ \|F_s v \wh{g}(s)\ve(g)\|^2
\end{equation}
($v\in\Domain, g\in\Step, v'\in\init', g'\in\Step, t\in\Rplus$) for a
constant $C(g,t)$ which is independent of $F$ and $v$.
These are known as the Fundamental Formula and Fundamental Estimate
of quantum stochastic calculus. We also need basic estimates for sums of
iterated integrals. Thus let
$L=\big(L_n\in\Op(\Domain\ulot\Dhat^{\ulot n};
\init'\ot\kilhat^{\ot n})\big)_{n\geq 0}$
satisfy the growth condition
\[
\forall_{\gamma\in\Rplus}\forall_{v\in\Domain}\forall_{F\subset\subset\Dhat}
\ \sum_{n\geq 0} \frac{\gamma^n}{\sqrt{n!}}
\max\{\|L_nv\ot\zeta_1\ot\cdots\ot\zeta_n\|: \zeta_1,\ldots,\zeta_n\in F\}
< \infty.
\]
Then the iterated quantum stochastic integrals of the $L_n$ sum to a
process $\big(\Lambda(L)\big)_{t\geq 0}$ satisfying
( for all $v\in\Domain, g\in\Step, v'\in\init', g'\in\Step$)
\begin{equation}\label{FF1full}
\la v'\ve(g'), \Lambda_t(L) v\ve(g)\ra =
e^{\la g,g'\ra}
\int_{\Gamma_{[0,t]}}d\sigma\
\la v'\pi_{\wh{g'}}(\sigma),
L_{\#\sigma} v\pi_{\wh{g}}(\sigma)\ra
\end{equation}

\begin{equation}\label{FEfull}
\|\Lambda_t(L) v\ve(g)\| \leq
\|\ve (g)\|
\sum_{n\geq 0} C(g,T)^n
\Big\{
\int_{\Gamma_{[0,t]}^n}d\sigma\
\|L_n v\pi_{\wh{g}}(\sigma)\|^2\Big\}^{1/2}
\end{equation}

\begin{equation}\label{FDEfull}
\big\|\big[ \Lambda_t(L) - \Lambda_r(L)\big] v\ve(g)\big\| \leq
\|\ve (g)\|
\sum_{n\geq 0} C(g,T)^{n+1}
\Big\{
\int_r^t ds\
\int_{\Gamma_{[0,s]}^n}d\omega\
\|L_n v\pi_{\wh{g}}(\omega)\|^2\Big\}^{1/2},
\end{equation}
for $0\leq r\leq t\leq T$,
where
\[
\pi_{\wh{g}}(\sigma) :=
\wh{g}(s_n)\ot \cdots \ot \wh{g}(s_1)
\text{ for } \sigma = \{s_1 < \cdots < s_n\}\in\Gamma,
\]
with $\pi_{\ghat}(\emptyset):=1$.

\subsection{Forms and maps}
Let $V$ and $V'$ be vector spaces and let $E$ and $E'$ be dense subspaces
of Hilbert spaces $\Hil$ and $\Hil'$.
For any sesquilinear map $\phi$ defined on $E'\times E$ and 
vectors $\zeta'\in E'$ and $\zeta \in E$
we write $\phi^{\zeta'}_{\zeta}$ for the value of $\phi$ at $(\zeta',\zeta)$.
We shall be invoking the following
natural relations:
\begin{align}
SL\big( E',E;L(V;V')\big) &\supset
L\big( E;L(V;V'\ulot|\Hil'\ra)\big) \label{inc 1} \\
&\supset
L\big(  V;V'\ulot\Op(E;\Hil') \big). \label{inc 2}
\end{align}
In case $\Hil$ is finite dimensional the inclusion~\eqref{inc 1}
is an equality. In case $V'$ is finite dimensional the
inclusion~\eqref{inc 2} is an equality. More generally the following
observation is relevant here.

\begin{lemma}
\label{on inc 2 lem}
Let $\chi\in L\big( E;L(V;V'\ulot |\Hil'\ra)\big)$
satisfy the localising property\tu{:}
\[
\forall_{x\in V}\ 
\exists_{V'_1 \textup{ finite dimensional subspace of } V'}\
\forall_{\zeta\in E}\ \
\chi_{|\zeta\ra}(x)\in V'_1\ulot |\Hil'\ra.
\]
Then $\chi\in L\big(  V;V'\ulot\Op(E;\Hil')\big)$.
\end{lemma}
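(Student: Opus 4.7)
The plan is to use the localising hypothesis to reduce, for each $x \in V$, to a finite-dimensional target in $V'$, where tensor expansions along a basis have unique coefficients depending linearly on $\zeta$. Fix $x \in V$, let $V'_1 \subset V'$ be the finite-dimensional subspace provided by the hypothesis, and pick a basis $e_1, \ldots, e_n$ of $V'_1$. For each $\zeta \in E$ we then have a unique expansion
\[
\chi_{|\zeta\ra}(x) = \sum_{i=1}^n e_i \ulot |\eta_i(\zeta)\ra, \qquad \eta_i(\zeta) \in \Hil'.
\]
Each coefficient $|\eta_i(\zeta)\ra$ is a linear image of $\chi_{|\zeta\ra}(x)$ under the $i$th dual functional on $V'_1$ (extended arbitrarily to a functional on $V'$), tensored with the identity on $|\Hil'\ra$. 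Since both $\zeta \mapsto |\zeta\ra$ and $\zeta \mapsto \chi_{|\zeta\ra}$ are linear, $\zeta \mapsto \eta_i(\zeta)$ defines an operator $T_i^{(x)} \in \Op(E; \Hil')$, and we set
\[
\Phi(x) := \sum_{i=1}^n e_i \ulot T_i^{(x)} \in V' \ulot \Op(E; \Hil').
\]

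Two routine verifications follow. First, $\Phi(x)$ is independent of the choice of $V'_1$ and of basis: any two such choices embed into a common finite-dimensional $V'_0 \supset V'_1$ carrying a basis extending either original one, and uniqueness of basis expansions in the algebraic tensor product $V' \ulot \Op(E; \Hil')$ forces the two resulting sums to coincide. Second, $\Phi \colon V \to V' \ulot \Op(E; \Hil')$ is linear: given $x_1, x_2 \in V$, take $V'_0$ containing the finite-dimensional subspaces associated to $x_1$ and $x_2$, and work in a common basis; linearity of $\chi_{|\zeta\ra}$ in its argument then transfers, via uniqueness of expansions, to additivity of the coefficient maps $T_i$ and hence of $\Phi$. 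Scalar multiplication is handled identically.

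Finally, I would identify $\Phi$ with $\chi$ under the inclusion~\eqref{inc 2}. That inclusion sends $\sum_i v'_i \ulot T_i \in V' \ulot \Op(E; \Hil')$ to the assignment $(\zeta, x) \mapsto \sum_i v'_i \ulot T_i|\zeta\ra \in V' \ulot |\Hil'\ra$, which by construction equals $\chi_{|\zeta\ra}(x)$. The essential --- and really the only --- use of the localising hypothesis is that without a fixed finite-dimensional $V'_1$ depending only on $x$, the coefficient kets $|\eta_i(\zeta)\ra$ could not be read off from a single basis across all $\zeta$, so no operators $T_i^{(x)}$ could be extracted; the hypothesis precisely removes this obstruction. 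All tensor products here are algebraic, so no analytic subtleties intervene.
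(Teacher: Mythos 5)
Your argument is correct and is precisely the argument the paper has in mind (its proof reads only ``Straightforward''): use the localising hypothesis to fix a finite-dimensional $V'_1$ for each $x$, extract coefficient operators $T_i^{(x)}\in\Op(E;\Hil')$ by uniqueness of basis expansions, and check well-definedness and linearity via a common enlarged subspace. Nothing further is needed.
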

\begin{proof}
Straightforward.
\end{proof}
\begin{defn}
Let $\chi\in L\big(E;L(V;V\ulot|\Hil'\ra)\big)$
for a vector space $V$, pre-Hilbert space $E$ and Hilbert space
$\Hil'$. A subspace $V_1$ of $V$ \emph{localises} $\chi$ if it satisfies
\[
\chi_{|\zeta\ra}(V_1) \subset V_1\ulot |\Hil\ra \quad (\zeta\in E);
\]
$\chi$ is \emph{finitely localisable} if
\[
V = \bigcup
\{ V_1: V_1 \text{ localises } \chi \text{ and } \dim V_1 < \infty \}.
\]
\end{defn}
\begin{rem}
By Lemma~\ref{on inc 2 lem}, if $\chi$ is finitely localisable
then it belongs to $L\big(  V;V\ulot\Op(E;\Hil')\big)$,
and localisation by $V_1$ translates to
\[
\chi (V_1)\subset V_1\ulot \Op(E;\Hil').
\]
\end{rem}

Apart from the case of finite dimensional $V$, the example we have in mind is that of
a coalgebra $\Clg$ with coproduct $\Delta$.
In this context all maps of the form
$\chi = (\id_{\Clg} \ot \varphi ) \circ \Delta$, where
$\varphi \in L\big( \Clg ; \Op (E) \big)$, are finitely localisable. This
follows from the Fundamental Theorem on Coalgebras.

\subsection{Matrix spaces} 
For the general theory of operator spaces and
completely bounded maps we refer to \cite{ERuan} and \cite{ospaces}.
For an operator space $\Yil$ in $B(\Hil;\Hil')$ and Hilbert spaces $\hil$
and  $\hil'$ define
\begin{equation} \label{matrix space defined}
\Yil\otM B(\hil;\hil') :=
\{ T\in B(\Hil\ot\hil; \Hil'\ot\hil') = B(\Hil;\Hil') \uwot B(\hil;\hil'):
\Omega_{\zeta',\zeta}(T)\in\Yil \}
\end{equation}
where $\Omega_{\zeta',\zeta}$ denotes the slice map 
$\id\olot\omega_{\zeta',\zeta}$: 
$T\mapsto E^{\zeta'}TE_{\zeta}$.
For us the relevant cases are
$\Yil\otM B(\hil)$ and
$\Yil\otM |\hil\ra$, referred to respectively as the
$\hil$-matrix space over $\Yil$ and the
$\hil$-column space over $\Yil$.
(Previous notations: $\Mat(\hil;\Yil)_{\bd}$ and $\Col(\hil;\Yil)_{\bd}$.)
Matrix spaces are operator spaces which lie between the spatial tensor
product $\Yil\spot B(\hil;\hil')$ and the ultraweak tensor product
$\ol{\Yil}\uwot B(\hil;\hil')$ ($\ol{\Yil}$ denoting the ultraweak closure 
of $\Yil$). They arise naturally in quantum stochastic
analysis where a topological state space is to be coupled with the
measure-theoretic noise --- if $\Yil$ is a $C^*$-algebra then typically
the inclusion $\Yil\spot B(\hil)\subset \Yil\otM B(\hil)$ is proper and
$\Yil\otM B(\hil)$ is \emph{not} a $C^*$-algebra.
Completely bounded maps between concrete operator spaces lift to
completely bounded maps between corresponding matrix spaces:
for $\phi\in CB(\Yil;\Yil')$ there is a unique map
$\Phi : \Yil\otM B(\hil;\hil')\to\Yil'\otM B(\hil;\hil')$ satisfying
\[
\Omega_{\zeta',\zeta}\circ \Phi = \phi\circ\Omega_{\zeta',\zeta}
\quad
(\zeta\in\hil, \zeta\in\hil').
\]
This map is completely bounded and is denoted 
$\phi\otM \id_{B(\hil;\hil')}$.
A variant on this arises when $\Yil'$ has the form
$\Xil\otM B(\Kil;\Kil')$:
\begin{equation} \label{phi hil}
\phi^{\hil;\hil'} :=
\tau\circ (\phi\otM \id_{B(\hil;\hil')})
\end{equation}
where $\tau$ is the flip on the second and third tensor components, so
that
\[
\phi^{\hil;\hil'}(\Yil\otM B(\hil;\hil'))\subset
\Xil\otM B(\hil;\hil')\otM B(\Kil;\Kil').
\]
When $\hil' = \hil$ we write $\phi^{\hil}$.

\subsection{Tensor-extended composition}
We develop a short-hand notation which will be useful here. 
Let $\Uil, \Vil$ and $\Wil$ be operator spaces and $V$ a vector space. If 
$\phi \in L(V ; \Uil \spot \Vil \spot \Wil)$ and
$\psi \in CB(\Vil ; \Vil')$ then we compose in the obvious way:
\begin{equation} \label{fullcomp}
\psi \fullcomp \phi :=
(\id_{\Uil} \ot \psi \ot \id_{\Wil} ) \circ \phi
\in L(V;\Uil \spot \Vil' \spot \Wil).
\end{equation}
Ambiguity is avoided provided that the context dictates which tensor 
component the second-to-be-applied map $\psi$ should act on. This works 
nicely for matrix-spaces too. Thus if
$\phi \in L(V ; \Yil \otM B(\hil;\hil'))$ and $\psi \in CB(\Yil ; \Yil')$
(or $\psi \in B(\Yil ; \Yil')$ if both $\hil$, $\hil'$ are 
finite-dimensional),
where $\Yil$ and $\Yil'$ are concrete operator spaces, then
\[
\psi \fullcomp \phi := (\psi \otM \id_{B(\hil;\hil')}) \circ 
\phi \in L(V ; \Yil' \otM B(\hil;\hil')) .
\]

The following elementary inequality will be needed in Section 3.

\begin{lemma} \label{psi dot phi}
Let $\psi \in B(\Xil ; \Yil)$ and
$\phi_1 , \ldots , \phi_n \in B\big( \Xil ; \Xil \otM |\Hil\ra \big)$
for concrete operator spaces $\Xil $ and $\Yil$ and finite dimensional Hilbert space $\Hil$.
Then
\[
\| \psi \fullcomp \phi_1 \fullcomp \cdots \fullcomp \phi_n \|
\leq (\dim \Hil)^{n/2} \| \psi \| \, \| \phi_1 \| \cdots \| \phi_n \| .
\]
\end{lemma}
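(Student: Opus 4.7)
The plan is to reduce everything to a sum over scalar compositions by fixing an orthonormal basis $(e_1, \ldots, e_d)$ of $\Hil$, where $d := \dim\Hil$, and expanding each $\phi_i$ in coordinates. Since $\Hil$ is finite-dimensional, every element of $\Yil \otM |\Hil^{\ot n}\ra$ has a unique expansion along the product basis of $\Hil^{\ot n}$, with norm computed directly from the coefficients via the column-space formula $\bigl\| \sum_{\bfk} F_{\bfk} \otimes |e_{\bfk}\ra \bigr\|^2 = \bigl\| \sum_{\bfk} F_{\bfk}^* F_{\bfk} \bigr\|$, obtained by reading $\sum_{\bfk} F_{\bfk} \otimes |e_{\bfk}\ra$ as a Hilbert-space operator and computing its adjoint-times-itself.

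First I would set $\phi_i^k(x) := E^{e_k} \phi_i(x) \in \Xil$ for $x \in \Xil$, $i \in \{1,\ldots,n\}$ and $k \in \{1,\ldots,d\}$, so that $\phi_i(x) = \sum_{k=1}^d \phi_i^k(x) \otimes |e_k\ra$. The positivity inequality $\phi_i^k(x)^* \phi_i^k(x) \leq \sum_{j} \phi_i^j(x)^* \phi_i^j(x) = \phi_i(x)^* \phi_i(x)$, valid in $B(\Hil_0)$ for any concrete realisation $\Xil \subset B(\Hil_0;\Hil_0')$, then immediately yields $\|\phi_i^k\| \leq \|\phi_i\|$ for all $i$, $k$.

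Next I would unwind the tensor-extended composition, which is associative in this setting (a direct check from~\eqref{fullcomp} on simple tensors), to arrive at the multi-indexed expansion
\[
(\psi \fullcomp \phi_1 \fullcomp \cdots \fullcomp \phi_n)(x) = \sum_{k_1, \ldots, k_n = 1}^d Z_{\bfk} \otimes |e_{k_1} \otimes \cdots \otimes e_{k_n}\ra,
\qquad
Z_{\bfk} := \psi\bigl(\phi_1^{k_1} \phi_2^{k_2} \cdots \phi_n^{k_n}(x)\bigr),
\]
as an element of $\Yil \otM |\Hil^{\ot n}\ra$. Each $Z_{\bfk}$ is a scalar composition of bounded maps, so $\|Z_{\bfk}\| \leq \|\psi\| \, \|\phi_1\| \cdots \|\phi_n\| \, \|x\|$. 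To close, the column-norm formula lets me bound $\bigl\| \sum_{\bfk} Z_{\bfk}^* Z_{\bfk} \bigr\|$ by $\sum_{\bfk} \|Z_{\bfk}\|^2$ via the triangle inequality applied to the positive operators $Z_{\bfk}^* Z_{\bfk}$; summing the uniform bound on $\|Z_{\bfk}\|$ over the $d^n$ multi-indices and taking square roots delivers the required estimate $(\dim\Hil)^{n/2} \|\psi\| \, \|\phi_1\| \cdots \|\phi_n\|$. The only real obstacle is bookkeeping --- verifying the multi-indexed expansion above against the definition of $\fullcomp$ is a routine unwinding with no analytic content, and the crude inequality $\|\sum_{\bfk} Z_{\bfk}^* Z_{\bfk}\| \leq \sum_{\bfk} \|Z_{\bfk}\|^2$ is precisely what produces the $(\dim\Hil)^{n/2}$ factor as a genuine, unavoidable feature of the estimate.
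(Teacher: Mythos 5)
Your proof is correct and takes essentially the same route as the paper's: both expand the composite over the product orthonormal basis of $\Hil^{\otimes n}$, bound each of the $(\dim \Hil)^n$ scalar slices $\psi \circ E^{e_{k_1}}\phi_1 \circ \cdots \circ E^{e_{k_n}}\phi_n$ by $\|\psi\|\,\|\phi_1\|\cdots\|\phi_n\|$, and sum the squares. The only cosmetic difference is that the paper organises the summation as a partial Parseval identity applied to the vectors $(\psi\fullcomp\phi_1\fullcomp\cdots\fullcomp\phi_n)(x)u$, whereas you use the column-norm identity $\|T\|^2=\|T^*T\|$ together with the triangle inequality for the positive operators $Z_{\mathbf{k}}^*Z_{\mathbf{k}}$.
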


\begin{proof}
Let $(e_i)$ be an orthonormal basis for $\Hil$ and, for a multi-index $\bm{i} = (i_1, \ldots , i_n)$
let $e(\bm{i})$ denote $e_{i_1} \ot \cdots \ot e_{i_n}$. Then, by a `partial Parseval relation'
(recall the `$E$ notation' introduced in \eqref{E-notation})
\[
\| \psi \fullcomp \phi_1 \fullcomp \cdots \fullcomp \phi_n (x) u \|^2
= \sum_{\bm{i}} \big\| E^{e(\bm{i})} ( \psi \fullcomp \phi_1 \fullcomp \cdots \fullcomp \phi_n)
(x) u \big\|^2
\quad (x\in \Xil, u\in\hil)
\]
where $\hil$ is the Hilbert space on which the operators of $\Yil$ act.
The result therefore follows since, for any unit vectors $d_1, \ldots , d_n \in \Hil$,
\begin{align*}
\| E^{d_1 \ot \cdots \ot d_n} \psi \fullcomp \phi_1 \fullcomp  \cdots \fullcomp \phi_n \|
 &= \| \psi \circ E^{d_1} \phi_1 \circ \cdots \circ E^{d_n} \phi_n \| \\
 & \leq \| \psi \| \, \| \phi_1 \| \cdots \| \phi_n \| .
\end{align*}
\end{proof}

The following variant on tensor-extended composition will also be useful.
For $\psi \in L\big(V ; \Op (E \ulot E' ; \Kil \ot \Kil') \big)$ where $V$
is a linear space, $E$ and $E'$ are dense subspaces of Hilbert spaces
$\Hil$ and $\Hil'$ and $\Kil$ and $\Kil'$ are further Hilbert spaces,
\begin{equation} \label{omega dot}  
\omega_{\zeta ,\eta} \fullcomp \psi :=
E^{\zeta} \psi ( \cdot )E_{\eta}, \quad \zeta \in \Kil', \eta \in E' .
\end{equation}
Thus $\omega_{\zeta ,\eta}\fullcomp\psi\in L\big(V;\Op(E ; \Kil)\big)$.

\section{Regularity and uniqueness} \label{regularity sec}

For this section fix a complex vector  space $V$ and exponential domains
$\Ddense = \Domain \ulot \ExpsD$ and
$\againDdense = \Domain' \ulot \Exps_{D'}$ in $\init \ot \FFock$ and
$\init' \ot \FFock$ respectively. A map
$V \to \Proc (\Ddense ; \init' \ot \FFock)$ is called a \emph{process on} $V$. We are
interested in such processes which are \emph{linear} and denote the collection of these by
$\Proc (V : \Ddense ; \init' \ot \FFock)$.
Also define
\[
\Proc^{\ddagger} (V : \Ddense , \againDdense) :=
\big\{k\in \Proc (V : \Ddense ; \init' \ot \FFock): k(V)\subset
\Proc^{\ddagger} (\Ddense ; \againDdense)\big\},
\]
and for such a process $k$  its \emph{conjugate process} 
$k^{\dagger}\in\Proc^{\ddagger} (V^{\dagger} : \againDdense , \Ddense)$
is defined by
$k^{\dagger}_t(x^{\dagger}) = k_t(x)^{\dagger}$.

A process $k$ on $V$ is $(\againDdense , \Ddense)$-\emph{pointwise weakly
continuous} if $s \mapsto ( \omega_{\xi' , \xi} \circ k_s) (x)$ is
continuous for all
$\xi' \in \Ddense'$,
$\xi \in \Ddense$ and $x \in V$; it is $(\againDdense , \Ddense)$-\emph{weakly regular} if,
for some norm on $V$, the following set is bounded
\[
\big\{ \| x \|^{-1} ( \omega_{\xi' ,\xi} \circ k_s ) (x) : x \in V \setminus \{ 0 \} ,
s\in [0,t] \big\}
\]
($\xi' \in \againDdense , \xi \in \Ddense, t \in \Rplus$).
In case
\begin{equation} \label{Dstar}
\Ddense=\Ddensestar:=(\init \ulot \Exps)
\text{ and }
\Ddense'=\Ddensestar':=(\init' \ot \Exps)
\end{equation}
we drop the $(\againDdense , 
\Ddense)$
and refer simply to weakly continuous and weakly regular processes.
 If $V$ already
has a norm  then weak regularity refers to that norm.
We denote the spaces of such processes which are also linear by
$\Procwc (V : \Ddense , \againDdense)$ and
$\Procwr (V : \Ddense, \againDdense)$ respectively.

A weaker notion of regularity tailored to the coefficient of a quantum
stochastic differential equation is
also relevant to the uniqueness question. Thus let $\phi \in SL\big(\wh{D'} , \Dhat ; L(V) \big)$
(sesquilinear maps). For each $R \subset \subset V$, $F \subset \subset D$ and $F' \subset \subset D'$
define the following subspace of $V$
\[
V^\phi_{F', R,F} :=
\Lin \big\{
(\phi^{\zeta'_1}_{\zeta_1} \circ\cdots\circ \phi^{\zeta'_n}_{\zeta_n})(z):
n\in\ZZplus, z \in R, \zeta'_1,\ldots, \zeta'_n\in \wh{F'},
\zeta_1,\ldots , \zeta_n \in \wh{F}
\big\}
\]
(with the convention that an empty product in $L(V)$ equals $\id_V$),
and for $f, f' \in \Step$ write $F'_t $ and $F_t$ for $\Ran f|_{[0,t[}$ and $\Ran f'|_{[0,t[}$
respectively.

\begin{defn}
A process $k:V \to \Proc ( \Ddense ; \init' \ot \FFock)$ is $(\againDdense, \Ddense)$-\emph{weakly
regular locally with respect to} $\phi$ if
$V^\phi_{F'_t , R,F_t}$ has a norm for which the following
is finite:
\begin{equation} \label{Cphi}
C^{k,\phi,t}_{\xi' , R, \xi} = \sup \Big\{ \| z\|^{-1} \big| \omega_{\xi',\xi} \circ
k_s (z) \big|
:
z \in V^\phi_{F'_t , R,F_t} \setminus \{ 0 \}, s \in [0,t[  \Big\}
\end{equation}
($R\subset\subset V,
\xi = v\ve (f)\in\Ddense, \xi' = v'\ve (f') \in \Ddense', t\in\Rplus$).
\end{defn}
\noindent
We shall refer to such norms as \emph{regularity norms} and let
$\Proc_{\phi \mathrm{wr}} (V:\Ddense , \againDdense)$ denote the space of such processes which
are linear.

\begin{propn}
Let $k \in \Procwc (V: \Ddense, \againDdense)$.
\begin{alist}
\item
Let $\phi \in SL\big(\wh{D'} , \Dhat ; L(V)\big)$ and suppose that
$\phi$ satisfies
\[
\dim V^\phi_{F'_t, R, F_t} < \infty
\quad
(R\subset\subset V, f \in \Step_{D}, f'\in\Step_{D'}, t\in\Rplus).
\]
Then $k \in \Proc_{\phi \mathrm{wr}} (V : \Ddense , \againDdense)$.
\item
Suppose that $V$ is a Banach space and $\omega_{\xi' , \xi} \circ k_t$ is bounded
for each $\xi' \in \againDdense , \xi \in \Ddense, t \in \Rplus$.
Then $k \in \Procwr (V: \Ddense , \againDdense)$.
\end{alist}
\end{propn}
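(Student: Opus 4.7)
The plan is to prove the two parts independently, both reducing to standard principles once one unpacks the definitions. In each case the given weak continuity of $k$ delivers pointwise boundedness in $s$, and the task is to upgrade this to uniform boundedness over the relevant set of test vectors. Part (a) uses finite-dimensionality to do this by hand, while part (b) uses the Banach--Steinhaus uniform boundedness principle.

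For part (a), fix $R \subset\subset V$, $f\in\Step_D$, $f'\in\Step_{D'}$ and $t\in\Rplus$, so that $F_t = \Ran f|_{[0,t[}$ and $F'_t = \Ran f'|_{[0,t[}$ are finite subsets of $D$ and $D'$ and $V^\phi_{F'_t, R, F_t}$ is finite-dimensional by hypothesis. Pick any basis $e_1, \ldots, e_n$ of $V^\phi_{F'_t, R, F_t}$ and equip this space with the $\ell^1$-norm relative to this basis: $\| \sum c_i e_i \| := \sum_i |c_i|$. For each $i$, weak continuity of $k$ ensures that $s \mapsto (\omega_{\xi', \xi} \circ k_s)(e_i)$ is continuous on $\Rplus$, hence bounded on the compact interval $[0,t]$; denote this bound by $M_i$. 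Then for $z = \sum_i c_i e_i \in V^\phi_{F'_t, R, F_t}\setminus\{0\}$ and $s \in [0,t[$,
\[
\frac{|(\omega_{\xi',\xi} \circ k_s)(z)|}{\| z\|}
\leq \frac{\sum_i |c_i| M_i}{\sum_i |c_i|}
\leq \max_i M_i,
\]
so $C^{k,\phi,t}_{\xi', R, \xi} \leq \max_i M_i < \infty$, showing $k \in \Procphiwr(V : \Ddense, \againDdense)$.

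For part (b), fix $\xi' \in \againDdense$, $\xi \in \Ddense$ and $t \in \Rplus$. The hypothesis gives that $\omega_{\xi',\xi} \circ k_s \in V^*$ for each $s\in [0,t]$, so we have a family $\{\omega_{\xi',\xi} \circ k_s : s \in [0,t]\}$ of bounded linear functionals on the Banach space $V$. For each $x \in V$, weak continuity implies that $s \mapsto (\omega_{\xi',\xi} \circ k_s)(x)$ is continuous on $\Rplus$, hence bounded on the compact interval $[0,t]$: this family is pointwise bounded. By the Banach--Steinhaus uniform boundedness principle,
\[
\sup_{s \in [0,t]} \| \omega_{\xi',\xi} \circ k_s \|_{V^*} < \infty,
\]
which is precisely the weak regularity condition with respect to the given Banach space norm on $V$.

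Neither step involves a substantive obstacle; the only point worth remarking is that in (a) the definition of local weak regularity with respect to $\phi$ permits one to \emph{choose} a norm on the finite-dimensional subspace $V^\phi_{F'_t, R, F_t}$ (which may vary with $R$, $F_t$, $F'_t$, $t$), so the $\ell^1$-norm relative to an ad hoc basis is admissible. In (b) one is forced to use the given Banach space norm, but Banach--Steinhaus applies directly to the whole of $V$.
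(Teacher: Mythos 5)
Your proof is correct and follows essentially the same route as the paper: part (a) via the $\ell^1$-norm on $V^\phi_{F'_t,R,F_t}$ relative to a chosen basis, bounding the quotient by the maximum over basis vectors of the (finite, by weak continuity) suprema on $[0,t]$; part (b) via pointwise boundedness from weak continuity followed by Banach--Steinhaus. No gaps.
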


\begin{proof}
Let $\xi = u \ve (f) \in \Ddense , \xi' = u' \ve (f') \in \againDdense$ and $t \in \Rplus$.

(a) In this case let $R \subset \subset V$ and consider the $l^1$-norm on
$V^\phi_{F'_t,R,F_t}$
determined by a choice of basis: $\| \sum^d_{i=1} \lambda _i e_i \| := \sum^d_{i=1} | \lambda_i |$.
By linearity
\[
C^{k, \phi,t}_{\xi' , R, \xi} \leq \sup \Big\{ \big| \la \xi' ,k_s (e_i) \xi \ra \big|
: = 0 \leq s \leq t, \ i=1, \ldots , d \Big\} ,
\]
which is finite by weak continuity.

(b) In this case the family of bounded linear functionals
$\{ \omega_{\xi' ,\xi} \circ k_s : 0 \leq s \leq t \}$ is pointwise bounded, by weak continuity,
and so the Banach-Steinhaus Theorem applies.
\end{proof}

\noindent
In particular, if $V$ is finite dimensional then, once equipped with a 
norm, Part (b) applies.
\begin{cor}
If $V$ is finite dimensional then
\[
\Procwc (V:\Ddense, \againDdense) \subset \Procwr (V: \Ddense , \againDdense) .
\]
\end{cor}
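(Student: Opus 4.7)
The plan is to deduce this directly from Part (b) of the preceding proposition, exploiting two standard facts about finite-dimensional normed spaces.

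First, since $V$ is finite-dimensional, we may fix any norm on $V$ (for instance, the $\ell^1$-norm coming from a choice of basis), whereupon $V$ becomes a Banach space; moreover all such norms are equivalent, so the choice is immaterial for the definition of weak regularity. Second, every linear functional on a finite-dimensional normed space is automatically bounded. Thus, given any $\xi \in \Ddense$, $\xi' \in \againDdense$ and $t \in \Rplus$, the linear map
\[
\omega_{\xi',\xi} \circ k_t : V \to \Comp,\quad x \mapsto \la \xi', k_t(x)\,\xi\ra,
\]
is automatically bounded.

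Given this, the hypothesis of Part (b) of the proposition is satisfied for every $k \in \Procwc(V:\Ddense,\againDdense)$, and the conclusion $k \in \Procwr(V:\Ddense,\againDdense)$ follows immediately. No further estimate is needed because the uniform bound in $s \in [0,t]$ is provided precisely by the Banach--Steinhaus argument invoked in the proof of Part (b), applied to the pointwise-bounded (by weak continuity) family $\{\omega_{\xi',\xi} \circ k_s : 0 \le s \le t\}$ of bounded linear functionals on the Banach space $V$.

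There is really no obstacle here: the corollary is a one-line application of Part (b), the only substantive observation being that finite-dimensionality makes the boundedness hypothesis on $\omega_{\xi',\xi} \circ k_t$ automatic rather than an extra assumption. One might note in passing that, since all norms on $V$ are equivalent, the bound in~\eqref{Cphi} is independent (up to a multiplicative constant) of the norm chosen.
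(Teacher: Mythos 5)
Your proof is correct and follows exactly the paper's own route: the paper simply notes that once the finite-dimensional space $V$ is equipped with a norm, Part (b) of the preceding proposition applies, the boundedness of each $\omega_{\xi',\xi}\circ k_t$ being automatic in finite dimensions. Your additional remark on the equivalence of norms is a harmless elaboration of the same point.
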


\subsection{Quantum stochastic differential equations}
Now let $\phi \in SL\big(\wh{D'}, \Dhat ; L(V) \big)$ and $\kappa \in L(V;W)$
where $W$ is a subspace of
$\Op (\Domain ; \init')$, for example $B(\init;\init')$.
A process $k :V \to \Proc (\Ddense ; \init' \ot \FFock)$
is a $(\againDdense , \Ddense )$-\emph{weak
solution} of the quantum stochastic differential equation
\begin{equation} \label{QSDE}
dk_t = k_t \fullcomp d\Lambda_\phi (t) , \quad k_0 = \iota \circ \kappa 
\end{equation}
(where $\iota$ denotes ampliation 
$\Op (\Domain ; \init') \to \Op (\Ddense ; \init' \ot \FFock )$),
if $k$ is $(\againDdense , \Ddense)$-pointwise weakly continuous and
\begin{align} \label{w soln}
\la \xi', k_t (x) \xi \ra &- \la v' , \kappa (x) v \ra \la \ve(g') ,\ve (g) \ra \notag \\
& \qquad\qquad\qquad\qquad = \int^t_0 ds \Big\la \xi' , k_s
\big(\phi^{\wh{g'}(s)}_{\wh{g}(s)} (x) \big) \xi \Big\ra
\end{align}
($\xi = v \ve (g) \in \Ddense, \xi' = v' \ve (g') \in \againDdense, x \in V, t \in \Rplus$).

\begin{rem}
Suppose that $W$ is a subspace of $\Op^{\ddagger}(\Domain,\Domain')$ and
$\againDdense = \Domain'\ulot\Exps_{D'}$.
If a $(\againDdense , \Ddense )$-weak
solution $k$ of the equation \eqref{QSDE} is 
$\Proc^{\ddag} (\Ddense , \Ddense')$-valued then the conjugate process  
$k^{\dagger}:V^{\dagger}\to\Proc^{\ddagger}(\againDdense,\Ddense)$
is a $( \Ddense, \againDdense )$-weak
solution of the quantum stochastic differential equation \eqref{QSDE} with
$\phi$ and $\kappa$ replaced by
 $\phi^{\dagger} \in SL\big(\wh{D}, \wh{D'} ; L(V^{\dagger}) \big)$ and
 $\kappa^{\dagger} \in L(V^{\dagger};W^{\dagger})$ respectively.
\end{rem}

A process $k \in \Proc (V:\Ddense ; \init' \ot \FFock)$
is a $\Ddense$-\emph{strong solution} of
the quantum stochastic differential equation~\eqref{QSDE} if there is a process
$K \in
\Proc (V: \Domain \ulot\Dhat\ulot\ExpsD; \init'\ot\kilhat \ot \FFock )$
which is pointwise quantum stochastically integrable and satisfies
\begin{equation} \label{strong}
\omega_{\zeta' , \zeta} \fullcomp K_t = k_t \circ \phi^{\zeta'}_{\zeta}
\quad
(\zeta' \in \wh{D'} , \zeta \in \Dhat, t \in \Rplus),
\end{equation}
and
\begin{equation} \label{integral eqn}
k_t (x) = \kappa (x) \ulot I + \int^t_0 K_s (x) \, d \Lambda_s 
\quad
(x \in V, t \in \Rplus).
\end{equation}
In particular strong solutions are (pointwise strongly)
continuous. In view of the First Fundamental Formula~\eqref{FF1},
any $\Ddense$-strong solution is a
$(\againDdense_* , \Ddense)$-weak solution.
Conversely, if $k$ is a $(\againDdense , \Ddense)$-weak solution, with
$\againDdense$ of the form $\Domain' \ulot \Exps_{D'}$, and $K$ is
a pointwise quantum stochastically integrable process satisfying
\eqref{strong} then \eqref{integral eqn} necessarily holds.

Strong solutions will be considered in subsequent sections.
For now let $W= \Op (\Domain ; \init')$.

\begin{thm}\label{linear-unique}
Let $\phi \in SL\big(\wh{D'}, \Dhat ; L(V) \big)$ and $\kappa \in L(V;W)$ 
and let $k$ be a $(\againDdense ,\Ddense)$-weak solution of the quantum 
stochastic differential equation \eqref{QSDE}. 
If $k$ is weakly regular locally with respect to 
$\phi$ and is such that,
for each $R \subset \subset V, v \ve (f) \in \Ddense, v' \ve (f') \in \againDdense,$ $t \in \Rplus$
and $s \in [0,t[$, the map $\phi^{\wh{f'}(s)}_{\fhat (s)}$ is bounded on
$V^\phi_{F'_t ,R,F_t}$
with respect to a corresponding regularity norm, then
\begin{alist}
\item \label{l-u a}
$k$ is linear, so that 
$k\in\Proc_{\phi\tu{wr}}(V:\Ddense,\againDdense)$, and
\item \label{l-u b}
the equation \eqref{QSDE} has no other such solutions.
\end{alist}
\end{thm}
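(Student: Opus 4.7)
The plan is to run a Picard-type iterated substitution in the weak integral equation \eqref{w soln}, exploiting the crucial feature that $V^\phi_{F'_t,R,F_t}$ is, by its very construction, stable under every coefficient map $\phi^{\wh{g'}(s)}_{\wh{g}(s)}$ arising from step-function test vectors $g \in \StepD$, $g' \in \Step_{D'}$ with $\Ran g|_{[0,t[} \subseteq F_t$ and $\Ran g'|_{[0,t[} \subseteq F'_t$. A preliminary observation is that since $g, g'$ are step functions, the map $s \mapsto \phi^{\wh{g'}(s)}_{\wh{g}(s)}$ takes only finitely many values on $[0,t[$, so the hypothesised boundedness of each such map with respect to a regularity norm on $V^\phi_{F'_t,R,F_t}$ yields a finite uniform bound $M := \max_{s\in[0,t[} \bigl\| \phi^{\wh{g'}(s)}_{\wh{g}(s)}|_{V^\phi_{F'_t,R,F_t}} \bigr\|$.

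For part (a), fix $x_1,\ldots,x_m \in V$, $\lambda_1,\ldots,\lambda_m \in \Comp$, $\xi = v\ve(g) \in \Ddense$, $\xi' = v'\ve(g') \in \againDdense$, and set $y := \sum_i \lambda_i x_i$, $R := \{y, x_1,\ldots,x_m\}$. Consider the scalar defect
\[
d_s := \la\xi', k_s(y)\xi\ra - \sum_i \lambda_i \la\xi', k_s(x_i)\xi\ra .
\]
Linearity of $\kappa$ and of each $\phi^{\zeta'}_\zeta$ makes the initial terms cancel when $\lambda_i$ times \eqref{w soln} for $x_i$ is subtracted from that for $y$, leaving $d_t = \int_0^t \wt d_s \, ds$ where $\wt d_s$ is a defect of exactly the same shape, but with each $x_i$ replaced by $\phi^{\wh{g'}(s)}_{\wh{g}(s)}(x_i)$ (and $y$ correspondingly by $\sum_i \lambda_i \phi^{\wh{g'}(s)}_{\wh{g}(s)}(x_i)$). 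After $n$ iterations, every vector entering the innermost defect lies in $V^\phi_{F'_t,R,F_t}$ with regularity-norm at most $M^n \max_{z \in R} \|z\|$; local weak regularity of $k$ then gives a stage-$n$ bound $\leq C(1 + \sum_i |\lambda_i|)\, M^n \max_{z\in R}\|z\|$ with $C = C^{k,\phi,t}_{\xi',R,\xi}$. Multiplying by the simplex volume $t^n/n!$ and letting $n \to \infty$ forces $d_t = 0$; varying $\xi, \xi'$ gives $k_t(y) = \sum_i \lambda_i k_t(x_i)$, hence the linearity of $k_t$ and therefore $k \in \Procphiwr(V : \Ddense, \againDdense)$.

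Part (b) follows by the same iteration in cleaner form. For two solutions $k^1, k^2$, set $\delta_s(x) := \la\xi', (k^1_s - k^2_s)(x)\xi\ra$; subtraction of the two copies of \eqref{w soln} cancels the $\kappa$-terms (no linearity of $k^1$ or $k^2$ is invoked) and yields
\[
\delta_t(x) = \int_0^t \delta_s\bigl(\phi^{\wh{g'}(s)}_{\wh{g}(s)}(x)\bigr)\, ds .
\]
An $n$-fold iteration expresses $\delta_t(x)$ as a simplex integral of $\delta_{s_n}$ applied to an $n$-fold $\phi$-composition of $x$ lying in $V^\phi_{F'_t, \{x\}, F_t}$ with regularity-norm at most $M^n \|x\|$. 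Combined with the weak-regularity estimate $|\delta_s(z)| \leq (C^1 + C^2)\|z\|$ (where $C^i = C^{k^i,\phi,t}_{\xi',\{x\},\xi}$) and the simplex volume $t^n/n!$, this forces $|\delta_t(x)| \leq (C^1+C^2)\|x\| M^n t^n/n! \to 0$, so $k^1_t = k^2_t$ pointwise.

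The main obstacle is really a bookkeeping one: ensuring that the $n$-fold $\phi$-images never leave $V^\phi_{F'_t, R, F_t}$ at any stage of the iteration, so that both the operator-norm bound $M$ and the local weak-regularity estimate apply uniformly throughout. This is settled once and for all by the explicit construction of $V^\phi_{F'_t, R, F_t}$ as the span of all finite $\phi$-iterates from $R$ with test vectors drawn from $\wh{F_t}$, $\wh{F'_t}$, combined with the finite-range property of the step-function test vectors which makes the family of relevant coefficient maps finite. A further small point in part (b) is that the two solutions may be weakly regular with respect to different norms on $V^\phi_{F'_t, R, F_t}$; this is harmless, since each $|\omega_{\xi',\xi}\circ k^i_s|$ can be estimated against its own norm before the final bound absorbs the larger of the two resulting $M_i$'s.
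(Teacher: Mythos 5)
Your proposal is correct and follows essentially the same route as the paper: a Picard-type iteration of the weak integral equation, using the invariance of $V^\phi_{F'_t,R,F_t}$ under the finitely many coefficient maps $\phi^{\wh{g'}(s)}_{\wh{g}(s)}$ together with the local weak-regularity bound to obtain an estimate of the form $C M^n t^n/n!$ that kills the defect. The only cosmetic differences are that the paper tracks linearity via a two-variable defect $\gamma^\lambda_s(z',z)$ rather than a finite family, and handles the two-norms issue in (b) via $\max\{\|z\|,\|z\|_\sim\}$, exactly as you indicate.
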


\begin{proof}
Fix $\xi' = u' \ve (f') \in \Ddense' , \xi = u \ve (f) \in \Ddense $ and 
$t \in \Rplus$.

(a) Let $x,y\in V$ and $\lambda\in\Comp$; set $R=\{ x, y, x+\lambda y\}$,
$U=V^\phi_{F'_t , R,F_t}$ with a regularity norm $\|\cdot\|$ and
$C=2C^{k,\phi,t}_{\xi' ,R,\xi}$; and define
\[
\gamma^\lambda_s (z',z)
= \Big\la
\xi' , \big[ k_s (z') + \lambda k_s (z) - k_s (z' +\lambda z) \big] \xi
\Big\ra
\text{ for } z,z' \in U, s \in [0,t] .
\]
By the regularity assumption this satisfies
\[
\big| \gamma^\lambda_s (z',z)\big| \leq C
\big( \| z' \| + | \lambda | \, \| z\| \big).
\]
The linearity of $\kappa$ and each $\phi^{\zeta}_{\zeta'}$ yields the identity
\[
\gamma^\lambda_s (z',z) = \int^s_0 dr \,
\gamma^\lambda_r \Big( \phi^{\wh{f'}(r)}_{\fhat (r)} (z'),
\phi^{\wh{f'}(r)}_{\fhat (r)} (z) \Big) .
\]
Iterating this and using the boundedness assumption gives
\[
\big| \gamma^\lambda_t (x,y) \big| \leq \frac{t^n}{n!} CM^n
\big( \| x\| + | \lambda | \, \| y\| \big), \quad n \in \Nat ,
\]
where
$M = \max \big\{ \| \phi^{\wh{c'}}_{\chat} (z) \| : z \in U, \| z \| \leq 1, c' \in F'_t ,c\in F_t \big\}$.
Thus $\gamma^\lambda_t (x,y)=0$. It follows that $k$ is linear.

(b) Let $\wt{k}$ be another such solution.
For $x\in V$ and $t\in\Rplus$ define
\[
\gamma_s (z) = \big\la \xi', [k_s (z) - \wt{k}_s (z) \big] \xi \big\ra
\qquad \big(z \in V^\phi_{F'_t ,\{ x\} ,F_t} , s \in [0,t]\big).
\]
Then
\[
\big| \gamma_s (z)\big| \leq C
\big( \max\{\|z\|, \| z \|_{\sim} \}\big),
\]
where $C=C^{k,\phi,t}_{\xi' ,\{x\},\xi} + C^{\wt{k},\phi,t}_{\xi' ,\{x\},\xi}$
and $\|\cdot\|$ and $\|\cdot\|_{\sim}$ denote the corresponding regularity 
norms.
Arguing as in (\ref{l-u a}) yields (\ref{l-u b})
\end{proof}

The following two special cases are relevant for the case of coalgebraic (\cite{LSaihp})
and operator space (Section~\ref{existence sec} of this paper) quantum
stochastic differential equations
respectively. The first applies in particular when $V$ is finite dimensional.

\begin{cor}\label{uniqueness cor}
Suppose that $\phi$ satisfies
\[
\dim V^\phi_{F' ,\{ x\} ,F} < \infty \quad
(F' \subset \subset D', x \in V, F \subset \subset D).
\]
Then the quantum stochastic differential equation \eqref{QSDE} has at most
one $(\againDdense ,\Ddense)$-weak
solution. Moreover any such solution is necessarily linear.
\end{cor}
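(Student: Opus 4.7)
The plan is to reduce the corollary directly to Theorem~\ref{linear-unique} by verifying its two hypotheses: that $k$ is weakly regular locally with respect to $\phi$, and that the restrictions $\phi^{\wh{f'}(s)}_{\fhat(s)}$ are bounded on the relevant subspaces for appropriate regularity norms.

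First I would observe that the finite-dimensionality hypothesis, although stated for singletons $\{x\}$, extends to arbitrary finite subsets $R\subset\subset V$ by the elementary identity
\[
V^\phi_{F'_t,R,F_t} = \sum_{x\in R} V^\phi_{F'_t,\{x\},F_t},
\]
which is immediate from the definition of $V^\phi_{\cdot,\cdot,\cdot}$ as a linear span. A finite sum of finite-dimensional subspaces is finite-dimensional, so the hypothesis of Proposition~2.1(a) is met; applying that proposition shows that $k\in\Proc_{\phi\tu{wr}}(V:\Ddense,\againDdense)$, with regularity norm given by the $\ell^1$-norm in any basis of $V^\phi_{F'_t,R,F_t}$.

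Next I would note that whenever $f\in\Step_D$, $f'\in\Step_{D'}$ and $s\in[0,t[$, we have $\fhat(s)\in\wh{F_t}$ and $\wh{f'}(s)\in\wh{F'_t}$, where $F_t=\Ran f|_{[0,t[}$ and $F'_t=\Ran f'|_{[0,t[}$. Hence by the very construction of $V^\phi_{F'_t,R,F_t}$ as a span of iterated images under maps $\phi^{\zeta'}_{\zeta}$ with $\zeta'\in\wh{F'_t}$, $\zeta\in\wh{F_t}$, the map $\phi^{\wh{f'}(s)}_{\fhat(s)}$ carries $V^\phi_{F'_t,R,F_t}$ into itself. Since this is a linear endomorphism of a finite-dimensional space, it is automatically bounded with respect to any choice of norm, in particular the regularity norm fixed above.

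With both hypotheses of Theorem~\ref{linear-unique} in place, parts~\rref{enumi}=(a) and~(b) of that theorem immediately deliver the linearity of any $(\againDdense,\Ddense)$-weak solution and the uniqueness of such a solution, respectively. I do not anticipate a real obstacle here: the only point deserving care is the trivial but essential verification that the iterated-image span $V^\phi_{F'_t,R,F_t}$ is invariant under the coefficient maps evaluated at step-function values, which is exactly what makes the hypothesis ``finite-dimensional'' do the work of ``bounded on a regularity-normed subspace'' needed by the theorem.
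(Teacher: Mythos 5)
Your proposal is correct and reconstructs exactly the argument the paper intends (the corollary is stated there without proof as a direct specialisation of Theorem~\ref{linear-unique}): the passage from singletons to finite $R$ via $V^\phi_{F'_t,R,F_t}=\sum_{x\in R}V^\phi_{F'_t,\{x\},F_t}$, the $l^1$-norm regularity bound from the proposition preceding that theorem, and the automatic boundedness of the linear endomorphisms $\phi^{\wh{f'}(s)}_{\fhat(s)}$ of a finite-dimensional invariant subspace are precisely the intended ingredients. The only caveat --- shared with the paper's own implicit argument --- is that the proposition you invoke is stated for processes in $\Procwc$, i.e.\ already linear, and its proof uses linearity of $\omega_{\xi',\xi}\circ k_s$ to pass from a basis of $V^\phi_{F'_t,R,F_t}$ to the whole space, whereas a weak solution is a priori only pointwise weakly continuous; this does not affect uniqueness among linear candidates, but it means the ``necessarily linear'' claim rests on applying the regularity estimate before linearity has been established, a wrinkle inherited from the paper rather than introduced by you.
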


\begin{cor}\label{uniqueness cor2}
Suppose that $V$ is a Banach space and the sesquilinear map $\phi$ is
$B(V)$-valued. Then  the quantum stochastic differential equation
\eqref{QSDE} has at most one linear $(\againDdense ,\Ddense)$-weak
solution $k$ for which each $ \omega_{\xi' ,\xi} \circ k_t$ is bounded
\tu{(}$\xi' \in \againDdense, \xi \in \Ddense, t \in \Rplus$\tu{)}.
\end{cor}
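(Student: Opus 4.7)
The plan is to deduce this from Theorem~\ref{linear-unique}(\ref{l-u b}) by showing that, under the corollary's hypotheses, any linear $(\againDdense,\Ddense)$-weak solution $k$ with each $\omega_{\xi',\xi} \circ k_t$ bounded automatically satisfies the two hypotheses of that theorem. Applied to two such solutions, the theorem then forces them to coincide. The Banach-space structure on $V$ furnishes the obvious family of regularity norms: on each subspace $V^\phi_{F'_t,R,F_t}$ I take the restriction of the given Banach norm on $V$.

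For local weak regularity with respect to $\phi$, I would repeat the Banach--Steinhaus argument used in part (b) of the preceding proposition. Fix $\xi = v\ve(f) \in \Ddense$, $\xi' = v'\ve(f') \in \againDdense$ and $t \in \Rplus$. Pointwise weak continuity of $k$, which is built into the definition of a weak solution, ensures that for every $x \in V$ the map $s \mapsto \omega_{\xi',\xi}(k_s(x))$ is continuous, hence bounded, on $[0,t]$. Combined with the hypothesis that each $\omega_{\xi',\xi} \circ k_s$ lies in $V^*$, this gives pointwise boundedness of the family $\{\omega_{\xi',\xi} \circ k_s : s \in [0,t]\} \subset V^*$, so Banach--Steinhaus yields a finite $\sup_{s \in [0,t]} \|\omega_{\xi',\xi} \circ k_s\|_{V^*}$. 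This supremum is an upper bound for $C^{k,\phi,t}_{\xi',R,\xi}$ as defined in \eqref{Cphi}, for every $R \subset\subset V$.

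For the boundedness condition on the coefficient, I would observe that each $V^\phi_{F'_t,R,F_t}$ is, by construction, invariant under $\phi^{\zeta'}_{\zeta}$ whenever $\zeta' \in \wh{F'_t}$ and $\zeta \in \wh{F_t}$. Since $\phi$ takes values in $B(V)$, each such $\phi^{\zeta'}_{\zeta}$ restricts to a bounded operator on the subspace, with norm dominated by its $B(V)$-norm; and as $s$ ranges over $[0,t[$ only finitely many such operators (indexed by $\wh{F'_t} \times \wh{F_t}$) appear, so the quantity $M$ arising in the proof of Theorem~\ref{linear-unique} is finite.

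Both hypotheses of Theorem~\ref{linear-unique} being verified, part~(\ref{l-u b}) applied to two linear solutions $k$ and $\wt{k}$ of the asserted type shows that $\wt{k}$ coincides with $k$. No step here presents a real obstacle: the corollary is essentially a translation of the global hypotheses (Banach space for $V$, $B(V)$-valued $\phi$, bounded matrix elements of $k_t$) into the local hypotheses required by Theorem~\ref{linear-unique}.
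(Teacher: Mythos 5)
Your proposal is correct and follows exactly the route the paper intends: the corollary is stated as a special case of Theorem~\ref{linear-unique}, with the Banach--Steinhaus argument from part~(b) of the preceding proposition supplying local weak regularity (via the restricted Banach norm on each $V^\phi_{F'_t,R,F_t}$) and the $B(V)$-valuedness of $\phi$ supplying the required boundedness on these invariant subspaces. Nothing is missing.
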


\section{Existence and dependence on initial conditions}
\label{existence sec}

For this section let $\Vil$ be an operator space (with conjugate
operator space $\Vil^{\dagger}$ and conjugation $x\mapsto x^{\dagger}$), 
let $\Yil$ be an operator space in $B(\init; \init')$, 
let $\Ddense = \init \ulot \ExpsD$ and
$\againDdense = \init' \ulot \Exps_{D'}$ for dense subspaces $D$ and $D'$ 
of $\kil$ and recall the notation~\eqref{Dstar}.
Then $\Proc (\Vil \to \Yil : \Ddense , \againDdense )$
denotes the following class of processes on $\Vil$:
\[
\big\{ k \in \Proc ( \Vil : \Ddense ; \init' \ot \FFock ) : \omega_{\ve' , \ve} \fullcomp
k_t (\Vil) \subset \Yil \text{ for all } \ve' \in \Exps_{D'} , \ve \in \ExpsD , t \in \Rplus \big\} .
\]
Recall that $\kil$-bounded means bounded if the noise dimension space 
$\kil$ is finite dimensional and completely bounded otherwise. For 
operator spaces $\Vil$ and $\Wil$, we write
$\kB (\Vil; \Wil)$ for the space of all linear $\kil$-bounded maps acting 
from $\Vil$ to $\Wil$, and give it the operator norm if $\kil$ is 
finite-dimensional and the cb-norm otherwise.

We consider the quantum stochastic differential equation~\eqref{QSDE}
\[
dk_t = k_t \fullcomp d \Lambda_\phi (t), \quad k_0 = \iota \circ \kappa
\]
where
$\phi\in L\big( \Dhat ;\kB\big(\Vil ; CB(\la \kilhat | ;\Vil )\big)\big)
\subset
SL(\kilhat,\Dhat; B(\Vil))$
and $\kappa \in \kB (\Vil ; \Yil)$.
Now ampliation is of bounded operators, so
$\iota(\Yil) \subset \YMotBF$. We say that $\phi$
has `$\kil$-bounded columns' (cf.\ \cite{LWblms}).
Note that $CB(\la\kilhat |;\Vil)=\kil$-$B(\la \kilhat |;\Vil )$
(topological isomorphism).

\begin{thm} \label{existence}
Let
$\phi \in L \big( \Dhat ; \kB \big( \Vil ; CB( \la \kilhat | ; \Vil ) \big) \big)$
and $\kappa \in \kB (\Vil ; \Yil)$.
Then the quantum stochastic differential equation~\eqref{QSDE} has a $\Ddense$-strong solution
$k \in \Proc (\Vil \to \Yil : \Ddense , \againDdense_*)$,
enjoying the following properties
\begin{alist}
\item
\label{ex a}
$k$ has $\kil$-bounded columns\tu{:}
\[
k_{t,|\ve\ra} \in
\kB ( \Vil ; \YMotF) \quad
(t \in \Rplus , \ve\in\ExpsD) .
\]
\item
\label{ex b}
For each $\ve\in\ExpsD$ the map
\[
\Rplus \to \kB \big( \Vil ; \YMotF \big), \quad
s \mapsto k_{s,| \ve \ra}
\]
is locally H\"{o}lder-continuous with exponent $\frac12$.
\item
\label{ex c}
If $\wt{k}$ is a linear $(\againDdense_1 , \Ddense_1)$-weak solution of \eqref{QSDE},
for exponential domains $\againDdense_1$
and $\Ddense_1$ contained in $\againDdense$ and $\Ddense$
respectively, then $\wt{k}$ is a restriction of
$k$: $\wt{k}_t (x) = k_t (x) \big|_{\Ddense_1}$
\ $(x \in \Vil , t \in \Rplus)$.
\item
\label{ex d}
If $\phi$ has cb-columns and $\kappa$ is completely bounded then $k$
has cb-columns and \tu{(}\ref{ex b}\tu{)} holds with
$CB\big( \Vil ; \YMotF \big)$
in place of $\kB\big( \Vil ; \YMotF \big)$.
\end{alist}
\end{thm}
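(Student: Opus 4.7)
My plan is to construct the strong solution as an infinite sum of iterated quantum stochastic integrals, generalising the Dyson-expansion approach used in \cite{LWblms} for identity initial conditions. Concretely, for each $n \geq 0$ and $x \in \Vil$, I would define
$L_n(x) \in \Op(\init \ulot \Dhat^{\ulot n}; \init' \ot \kilhat^{\ot n})$
so that, morally,
\[
L_n(x) = \big( \kappa \fullcomp \phi \fullcomp \cdots \fullcomp \phi \big)(x) \quad (n \text{ copies of } \phi),
\]
where each $\phi(\zeta) \in \kB(\Vil; \Vil \otM |\kilhat\ra)$ via the identification $CB(\la\kilhat|;\Vil) \cong \Vil \otM |\kilhat\ra$, the tensor-extended composition \eqref{fullcomp} iterates the $|\kilhat\ra$-ampliations, and the final $\kappa$ places $L_n(x) \in \Yil \otM |\kilhat^{\ot n}\ra$. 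The candidate strong solution is then $k_t(x) := \Lambda_t(L(x))$, with $K_t(x)$ built analogously from the shifted family $(L_{n+1}(x))_{n \geq 0}$, so that $\omega_{\zeta',\zeta} \fullcomp K_t = k_t \circ \phi^{\zeta'}_{\zeta}$ from \eqref{strong} is built in by construction.

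The main analytic step is verifying the growth condition that licenses the iterated integrals of Section~\ref{preliminaries sec}. For $F \subset\subset \Dhat$ spanning a $d$-dimensional subspace of $\kilhat$, Lemma~\ref{psi dot phi} applied inductively yields
\[
\max\big\{ \|L_n(x)(v \ot \zeta_1 \ot \cdots \ot \zeta_n)\| : \zeta_1, \ldots, \zeta_n \in F \big\} \leq d^{n/2} \|\kappa\| \, M_F^n \, \|v\| \, \|x\|,
\]
where $M_F := \sup_{\zeta \in F}\|\phi(\zeta)\|$, from which $\sum_n \gamma^n (n!)^{-1/2} d^{n/2} M_F^n < \infty$ by the ratio test. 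When $\kil$ is infinite-dimensional, the cb-analogue of Lemma~\ref{psi dot phi}, based on the multiplicativity $\|\psi \fullcomp \phi_1 \fullcomp \cdots \fullcomp \phi_n\|_{\cb} \leq \|\psi\|_{\cb}\|\phi_1\|_{\cb} \cdots \|\phi_n\|_{\cb}$, gives the same conclusion without the $d^{n/2}$ factor. The integral identity \eqref{integral eqn} is immediate from the definition of $\Lambda_t(L)$ as a sum of iterated quantum stochastic integrals, so $k$ is a $\Ddense$-strong solution of \eqref{QSDE}.

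Properties (a) and (b) then read off Section~\ref{preliminaries sec}'s estimates. Applying \eqref{FEfull} with the growth bound on $L_n(x)$ shows that $k_{t,|\ve\ra} : \Vil \to \Yil \otM |\FFock\ra$ is bounded, and the cb-version of the same estimate upgrades this to $\kil$-boundedness, giving (a). The Hölder estimate in (b) follows from \eqref{FDEfull}: the factor $\int_r^t ds$ contributes $(t-r)$, whose square root supplies the exponent $\tfrac{1}{2}$, uniformly on compact $t$-intervals. Part (d) is the same argument carried through entirely in cb-norm. For part (c), the form $\phi^{\zeta'}_\zeta(x) := E^{\zeta'} \phi(\zeta)(x)$ takes $B(\Vil)$-values by the $\kil$-boundedness hypothesis, so Corollary~\ref{uniqueness cor2} is applicable on the smaller domain $\Ddense_1$; the required boundedness of $\omega_{\xi',\xi} \circ \wt{k}_t$ for a linear weak solution $\wt{k}$ follows by iterating \eqref{w soln} on finitely generated subspaces and invoking Banach--Steinhaus, forcing $\wt{k}$ to equal the restriction of $k$.

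The principal technical obstacle, I expect, is the operator-space bookkeeping needed to confirm that the iterated compositions $L_n(x)$ really land in the concrete matrix space $\Yil \otM |\kilhat^{\ot n}\ra$ rather than only in its ultraweak closure --- i.e.\ that tensor-extended composition preserves $\kil$-boundedness at each stage and respects the concrete operator space structure. A secondary check is that the families $L_n$ and $K$ are linear in $x$, adapted, and sufficiently measurable that the iterated quantum stochastic integral machinery of Section~\ref{preliminaries sec} applies without modification.
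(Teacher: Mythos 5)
Your construction coincides with the paper's: $k_t = \Lambda_t \circ \up$ with $\up^n_{|\zeta_1\ot\cdots\ot\zeta_n\ra} = \tau\circ\big(\kappa\fullcomp\phi_{|\zeta_n\ra}\fullcomp\cdots\fullcomp\phi_{|\zeta_1\ra}\big)$, growth bounds from Lemma~\ref{psi dot phi} when $\kil$ is finite dimensional and from cb-multiplicativity of tensor-extended composition otherwise, the estimates \eqref{FEfull} and \eqref{FDEfull} for (a) and (b), matrix amplification for (d), and $K_{t,|\zeta\ot\ve\ra} := k_{t,|\ve\ra}\fullcomp\phi_{|\zeta\ra}$ to witness \eqref{strong}. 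The one point you wave through --- ``\eqref{integral eqn} is immediate from the definition of $\Lambda_t(L)$'' --- is handled in the paper by first verifying the weak-solution identity \eqref{w soln} via the form representation \eqref{form repn} and the decomposition of $\int_{\Gamma_{[0,t]}}d\sigma$ into the $\emptyset$-atom plus $\int_0^t ds\int_{\Gamma_{[0,s]}}d\rho$, and then using the general fact that a weak solution admitting an integrable $K$ satisfying \eqref{strong} satisfies \eqref{integral eqn}; this is a presentational rather than mathematical difference.

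The step that fails as written is your justification of part (c). Banach--Steinhaus, as used in Section~\ref{regularity sec}, upgrades pointwise boundedness of a family of \emph{already bounded} functionals $\{\omega_{\xi',\xi}\circ\wt{k}_s : s\in[0,t]\}$ to uniform boundedness; it cannot establish that an individual $\omega_{\xi',\xi}\circ\wt{k}_t$ is bounded on $\Vil$ in the first place. Nor does iterating \eqref{w soln} supply this: the $n$-th iterate expresses $\omega_{\xi',\xi}\circ\wt{k}_t(x)$ as explicit terms plus a remainder involving $\omega_{\xi',\xi}\circ\wt{k}_s$ evaluated on $n$-fold compositions of the maps $\phi^{\zeta'}_{\zeta}$, and without an a priori regularity bound on $\wt{k}$ the remainder cannot be controlled. (The spaces $V^\phi_{F'_t,R,F_t}$ are infinite dimensional for a general operator space $\Vil$, so the finite-dimensional uniqueness route of Corollary~\ref{uniqueness cor} is unavailable.) The paper obtains (c) by direct appeal to Corollary~\ref{uniqueness cor2}, whose hypotheses include exactly the boundedness of each $\omega_{\xi',\xi}\circ\wt{k}_t$ that you are attempting to derive; that regularity must either be assumed of the competing solution or established by an argument genuinely different from the one you sketch.
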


\begin{proof}
Define a process
$k \in \Proc (\Vil \to \Yil : \Ddense , \againDdense)$ as follows:
$k_t = \Lambda_t \circ \up$ where
\[
\up^n \in L \big( \Dhat^{\ulot n} ; \kB (\Vil ; \YMotkn \big)
\subset 
L \big( \Vil ; \Op (\init \ulot \Dhat^{\ulot n} ; 
\init' \ot \kilhat^{\ot n} ) \big)
\quad
(n \in \ZZ)
\]
is defined by
\begin{equation}\label{upsilon}
E^{\zeta'_1 \ot \cdots \ot \zeta'_n} \up^n_{| \zeta_1 \ot \cdots \ot \zeta_n \ra}
= \kappa \circ \phi^{\zeta'_n}_{\zeta_n} \circ \cdots \circ \phi^{\zeta'_1}_{\zeta_1}
\quad
(\zeta_1 , \ldots ,\zeta_n \in \Dhat, \zeta'_1, \ldots , \zeta'_n \in \kilhat).
\end{equation}
Thus, in terms of any concrete
realisation of $\Vil$ in $B(\Hil)$ for a Hilbert space $\Hil$,
\[
\up^n_{| \zeta_1 \ot \cdots \ot \zeta_n \ra} = \tau \circ \left(\kappa \fullcomp
\phi_{| \zeta_n\ra} \fullcomp \cdots \fullcomp \phi_{| \zeta_1 \ra} \right) ,
\]
where $\tau: \YMotkn \to \YMotkn$ denotes the tensor flip reversing the order
of $n$ copies of $\kilhat$.
Therefore, if $\kil$ is finite dimensional then Lemma~\ref{psi dot phi}
implies that
\[
\| \up^n_{| \zeta_1 \ot \cdots \ot \zeta_n \ra} \| \leq \| \kappa \|
\left( \sqrt{\dim \kilhat} \, \max_i \| \phi_{| \zeta_i \ra} \| \right)^n ,
\]
whereas if $\kappa$ is completely bounded and $\phi$ has cb-columns
then
\[
\| \up^n_{| \zeta_1 \ot \cdots \ot \zeta_n \ra} \|_{\cb} \leq \| \kappa \|_{\cb}
\left( \ \max_i \| \phi_{| \zeta_i \ra} \|_{\cb} \right)^n .
\]
It follows from~\eqref{FEfull} and~\eqref{FDEfull} that
$k_{t,| \ve \ra}(\Vil)\subset\YMotF$ and
$k_{t,| \ve \ra}$ is bounded
$\Vil \to \YMotF$
($\ve = \ve(g)  \in \ExpsD, t \in \Rplus$), with
\begin{align*}
&\| k_{t, | \ve \ra} \| \leq
\| \kappa \|' \| \ve \|
 \sum_{n \geq 0} \frac{C^n}{\sqrt{n!}} , \quad \text{ and } \\
& \| k_{t, | \ve \ra} - k_{s, | \ve \ra} \|
 \leq \sqrt{t-s} \| \kappa \|' \| \ve \| C(g,T) \sum_{n \geq 0}
   \frac{C^n}{\sqrt{n!}}
\quad
(0\leq s\leq t\leq T),
\end{align*}
where
$C= C(g,T)\sqrt{C'}
\max \big\{ \| \phi_{| \zeta \ra} \|' :
\zeta \in \Ran \ghat \big|_{[0,T]} \big\}$,
with $\|\cdot \|'$ and $C'$ meaning $\|\cdot\|$ and $\dim\kilhat$ 
respectively, when $\kil$ is finite-dimensional, but $\|\cdot\|_{\cb}$ and 
$1$ otherwise.
We have therefore shown that $k$ satisfies
(\ref{ex a}) and (\ref{ex b}) when $\kil$ is finite dimensional.

Now suppose that $\kappa$ is completely bounded and
$\phi$ has cb-columns. Then, identifying
$M_N\big( \YMotk\big)= \YMotk \otM M_N$
with $M_N(\Yil) \otM |\kilhat\ra = \Yil \otM M_N \otM |\kilhat\ra$ gives
\begin{equation} \label{matrix lifting}
(k_{t, | \ve \ra} )^{(N)} =
\wt{k}_{t, | \ve \ra} \quad
(N \in \Nat , t \in \Rplus , \ve \in \ExpsD),
\end{equation}
where $\wt{k}$ is the process arising from the above construction when
$\kappa$ and $\phi$ are replaced by
$\kappa^{(N)}$ and $\phi^N$,
$\phi^N$ being given by
$(\phi^N)_{| \zeta \ra} = (\phi_{| \zeta \ra})^{(N)}$. It follows
that the above estimates apply with cb-norms on the left-hand side (as 
well as the right).
This completes the proof of (\ref{ex a}), (\ref{ex b}) and (\ref{ex d}).

Recalling~\eqref{FF1full} we next note that
$k$ enjoys the following useful `form representation':
for $\ve = \ve (g) \in \ExpsD$, $\ve' = \ve (g') \in \Exps$ and $t \in \Rplus$,
\begin{equation} \label{form repn}
e^{-\la g',g\ra}
\omega_{\ve' , \ve} \fullcomp k_t
= \int_{\Gamma_{[0,t]}} d \sigma \, \up^{g' ,g}_\sigma
\quad (t\in\Rplus)
\end{equation}
in $B(\Vil;\Yil)$ where
\begin{equation} \label{upsilon g' g}
\up^{g',g}_\sigma
=
\kappa \circ \phi^{\wh{g'}(s_1)}_{\ghat (s_1)} \circ \cdots \circ
\phi^{\wh{g'}(s_n)}_{\ghat (s_n)}
\text{ for }
\sigma = \{ s_1 < \cdots < s_n \}\in\Gamma.
\end{equation}
Therefore
\begin{align*}
\omega_{\ve',\ve}\fullcomp k_t - \la \ve',\ve\ra \kappa
&
=
\la\ve',\ve\ra
\int_{\Gamma_{[0,t]}}d\sigma \, (1- \delta_{\emptyset} (\sigma ))
\up^{g',g}_\sigma \\
&
=
\la\ve',\ve\ra
\int^t_0 ds \int_{\Gamma_{[0,s]}} d \rho\
\up^{g',g}_{\rho\cup \{s\}} \\
&
=
\la\ve',\ve\ra
\int^t_0 ds \int_{\Gamma_{[0,s]}} d \rho\
\up^{g',g}_{\rho} \circ
\phi^{\wh{g'}(s)}_{\ghat (s)} \\
&
=
\int^t_0 ds\
\omega_{\ve',\ve}\fullcomp \big(k_s \circ \phi^{\wh{g'}(s)}_{\ghat (s)}\big),
\end{align*}
so $k_s$ is a ($\Ddense,\againDdense$)-weak solution of~\eqref{QSDE}.

Now define a process
$K \in \Proc \big( \Vil \to \YMotk
 : \init \ulot \Dhat \ulot \ExpsD ,\init' \ulot \wh{D'} \ulot \Exps\big)$ 
by
\[
K_{t, | \zeta \ot\ve \ra} = k_{t,| \ve \ra} \fullcomp
\phi_{| \zeta \ra}
\quad
(t \in \Rplus , \zeta \in \Dhat, \ve \in \ExpsD).
\]
Since it is (pointwise strongly) continuous, by part (b),
$K$ is quantum stochastically integrable.
Moreover, since
\[
E^{\zeta'} K_{t, | \zeta \ot\ve \ra} =
E^{\zeta'} k_{t, | \ve \ra}
\fullcomp \phi_{| \zeta \ra} =
k_{t, | \ve \ra} \circ \phi^{\zeta'}_{\zeta} ,
\]
$K$ also satisfies \eqref{strong}. Therefore $k$ is a
$\Ddense$-strong solution of \eqref{QSDE}.
Part (c) follows from the uniqueness result Corollary~\ref{uniqueness cor2}.
This completes the proof.
\end{proof}

\noindent
\emph{Notation}.
The process uniquely determined by $\kappa$ and $\phi$ in this theorem
will be denoted $k^{\kappa,\phi}$,
extending the established notation $k^\phi$
for the case $\Yil = \Vil$ and $\kappa = \id_{\Vil}$.

\begin{cor}
Let $\phi \in \kB \big(\Vil; CB(T(\kilhat);\Vil)\big)$ and
$\kappa \in \kB(\Vil;\Yil)$. Then \tu{(}for any exponential domains 
$\Ddense$ and $\againDdense$\tu{)} the quantum stochastic differential
equation \eqref{QSDE} has a unique $\Ddense,\againDdense$-weakly regular 
weak solution $k \in \Proc(\Vil\to \Yil:\Ddense,\againDdense)$; it is also 
a $\Ddense$-strong solution. 
\end{cor}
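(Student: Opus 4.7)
The plan is to deduce the corollary from Theorem~\ref{existence} and Corollary~\ref{uniqueness cor2} after repackaging the hypothesis on $\phi$; the argument amounts to translating the global hypothesis into the pointwise form required by the existence theorem while keeping track of the $B(\Vil)$-valuedness needed for uniqueness.

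Under the canonical identification $CB(T(\kilhat);\Vil)\cong\Vil\otM B(\kilhat)$, the hypothesis on $\phi$ reads as a $\kil$-bounded map $\phi:\Vil\to\Vil\otM B(\kilhat)$. For each $\zeta\in\kilhat$, right-slicing yields
\[
\phi_{|\zeta\ra}(x):=\phi(x)E_\zeta
\in \Vil\otM|\kilhat\ra = CB(\la\kilhat|;\Vil),
\]
with $\|\phi_{|\zeta\ra}\|\leq\|\phi\|\,\|\zeta\|$ and linear dependence on $\zeta$. Restricting $\zeta$ to any dense subspace $D\subset\kil$ thus delivers data $\phi\in L\big(\Dhat;\kB(\Vil;CB(\la\kilhat|;\Vil))\big)$ as required by Theorem~\ref{existence}. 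Moreover, the sesquilinear values $\phi^{\zeta'}_\zeta(x)=E^{\zeta'}\phi(x)E_\zeta$ lie in $\Vil$ with $\|\phi^{\zeta'}_\zeta(x)\|\leq\|\phi\|\,\|\zeta\|\,\|\zeta'\|\,\|x\|$, so the sesquilinear $\phi$ is $B(\Vil)$-valued.

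Now apply Theorem~\ref{existence} with $D=D'=\kil$ to produce a strong solution $k\in\Proc(\Vil\to\Yil:\init\ulot\Exps,\init'\ulot\Exps)$. Given arbitrary exponential domains $\Ddense=\Domain\ulot\ExpsD\subset\init\ulot\Exps$ and $\againDdense=\Domain'\ulot\Exps_{D'}\subset\init'\ulot\Exps$, the restriction of $k$ to $\Ddense$ is $\Ddense$-strong and hence $(\againDdense,\Ddense)$-weak. For $\xi=v\ve(g)\in\Ddense$ and $\xi'=v'\ve(g')\in\againDdense$,
\[
\big|(\omega_{\xi',\xi}\circ k_s)(x)\big|
\leq \|v'\ve(g')\|\,\|v\|\,\big\|k_{s,|\ve(g)\ra}\big\|\,\|x\|,
\]
and the local H\"older-continuity in $s$ of $k_{s,|\ve(g)\ra}$ from Theorem~\ref{existence}(\ref{ex b}) yields a locally uniform bound, giving weak regularity as defined in Section~\ref{regularity sec}.

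Finally, uniqueness: $\Vil$ is a Banach space and, by the first paragraph, the sesquilinear $\phi$ is $B(\Vil)$-valued, so Corollary~\ref{uniqueness cor2} applies. A weakly regular weak solution has, by the very definition of weak regularity, bounded matrix-elements $\omega_{\xi',\xi}\circ k_t$; furthermore it is automatically linear by Theorem~\ref{linear-unique}(\ref{l-u a}), whose hypotheses follow from the global weak regularity together with the $B(\Vil)$-valuedness of $\phi$ (the restriction of the global norm on $\Vil$ serves as regularity norm on each $V^\phi_{F'_t,R,F_t}$, and $\phi^{\zeta'}_\zeta$ is bounded there). Hence Corollary~\ref{uniqueness cor2} forces any such solution to coincide with the $\Ddense$-strong solution constructed above. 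I foresee no serious analytic obstacle: all substantive work was accomplished in Theorem~\ref{existence}, and the main points of care are just the slice identification at the start and the linearity check at the end.
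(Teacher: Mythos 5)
Your proof is correct and follows exactly the route the paper intends (the paper leaves the proof implicit, merely noting the identification $CB(T(\kilhat);\Vil)\cong\VMotBk$ so that slicing yields the columns required by Theorem~\ref{existence}, with uniqueness from Corollary~\ref{uniqueness cor2}). The only redundancy is your appeal to Theorem~\ref{linear-unique}(\ref{l-u a}) for linearity, since membership of $\Proc(\Vil\to\Yil:\Ddense,\againDdense)$ already builds linearity in; otherwise the slice estimates, the weak-regularity check via local boundedness of $s\mapsto k_{s,|\ve\ra}$, and the uniqueness step are all as required.
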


\noindent
Here $T(\kilhat)$ denotes the operator space of trace-class operators on 
$\kilhat$ and we are invoking the natural complete isometry
$CB\big(T(\kilhat);\Vil\big) = 
CB\big(|\kilhat\ra;CB(\la\kilhat |;\Vil)\big)$. 
If $\Vil$ is a concrete operator space then there is a natural completely 
isometric isomorphism between $CB\big(T(\kilhat);\Vil\big)$ and $\VMotBk$, 
so that $\phi$ above may be viewed as a map in 
$\kB \big( \Vil; \VMotBk \big)$.

\begin{cor} \label{conjugate of k phi cor}
Suppose that $\phi$ has a conjugate $\phi^{\dagger}$ in
$L\big( \wh{D'};
\kB (\Vil^{\dagger};CB(\la\kilhat |;\Vil^{\dagger})\big)$.
Then
$k^{\kappa,\phi}\in\Proc^{\ddagger}(\Vil \to \Yil:\Ddense,\againDdense)$ and
$(k^{\kappa,\phi})^{\dagger} =
k^{\kappa^{\dagger},\phi^{\dagger}}$.
\end{cor}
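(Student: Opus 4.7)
The plan is to apply Theorem~\ref{existence} to the pair $(\kappa^{\dagger}, \phi^{\dagger})$, with the roles of $\Vil, \Ddense, \againDdense, \init, \init'$ appropriately swapped, to produce a process $k^{\kappa^{\dagger},\phi^{\dagger}} \in \Proc(\Vil^{\dagger} \to \Yil^{\dagger} : \againDdense, \Ddense_*)$; and then to identify its matrix elements with the conjugated matrix elements of $k^{\kappa, \phi}$ via the form representation~\eqref{form repn}. The hypothesis on $\phi^{\dagger}$ is precisely what is required for this invocation of Theorem~\ref{existence}.

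The essential computational input is the pointwise conjugacy
\[
\kappa^{\dagger}(v^{\dagger}) = \kappa(v)^{\dagger}
\qquad \text{and} \qquad
(\phi^{\dagger})^{\zeta}_{\zeta'}(v^{\dagger}) = \big(\phi^{\zeta'}_{\zeta}(v)\big)^{\dagger}
\qquad (v \in \Vil, \zeta \in \kilhat, \zeta' \in \wh{D'}),
\]
which is immediate from the definitions of $\kappa^{\dagger}$ and $\phi^{\dagger}$. A straightforward induction on $\#\sigma$ applied to~\eqref{upsilon g' g} then yields
\[
\wt{\up}^{g',g}_{\sigma}(v^{\dagger}) = \big(\up^{g',g}_{\sigma}(v)\big)^{\dagger}
\qquad (\sigma \in \Gamma, g \in \Step_D, g' \in \Step_{D'}, v \in \Vil),
\]
where $\wt{\up}$ denotes the analogue of~\eqref{upsilon g' g} built from $(\kappa^{\dagger}, \phi^{\dagger})$.

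Substituting this identity into the form representation~\eqref{form repn} applied to both $k^{\kappa,\phi}$ and $k^{\kappa^{\dagger},\phi^{\dagger}}$, and taking matrix elements against $\xi = u\ve(g) \in \Ddense$ and $\xi' = u'\ve(g') \in \againDdense$, the conjugate-linearity of the scalar exponential kernel $e^{\la g',g\ra}$ converts directly into complex conjugation and gives
\[
\la\xi, k^{\kappa^{\dagger},\phi^{\dagger}}_t(v^{\dagger})\xi'\ra
= \overline{\la\xi', k^{\kappa,\phi}_t(v)\xi\ra}
\qquad (v \in \Vil, t \in \Rplus).
\]
This single identity delivers both conclusions simultaneously: it shows that every element of $\againDdense$ lies in $\Dom\big(k^{\kappa,\phi}_t(v)^*\big)$, so $k^{\kappa,\phi} \in \Proc^{\ddagger}(\Vil \to \Yil : \Ddense, \againDdense)$, and it identifies the restriction of that adjoint to $\againDdense$ with $k^{\kappa^{\dagger},\phi^{\dagger}}_t(v^{\dagger})$, giving $(k^{\kappa,\phi})^{\dagger} = k^{\kappa^{\dagger},\phi^{\dagger}}$. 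The only step that needs care is the inductive verification of the conjugacy identity for the iterated composition $\wt{\up}$; once that is in hand, everything else is clean bookkeeping in the form representation, with no analytic estimates beyond those already provided by Theorem~\ref{existence}.
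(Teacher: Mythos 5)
Your proposal is correct and follows essentially the same route as the paper: the paper's proof consists precisely of noting the conjugacy identity $\wt{\up}^{g,g'}_\sigma = \big(\up^{g',g}_\sigma\big)^{\dagger}$ for the iterated compositions built from $\kappa^{\dagger},\phi^{\dagger}$ and then reading off both conclusions from the form representation~\eqref{form repn} applied to $k^{\kappa,\phi}$ and $k^{\kappa^{\dagger},\phi^{\dagger}}$. You merely spell out the induction and the resulting matrix-element identity in more detail; the only (harmless) discrepancy is notational, in that the bra/ket superscript and subscript roles swap when passing to $\wt{\up}$, as in the paper's $\wt{\up}^{g,g'}_\sigma$.
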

\begin{proof}
In view of the identity
\[
\wt{\up}^{g,g'}_\sigma
= \big(\up^{g',g}_\sigma\big)^{\dagger}
\quad
(g\in\StepD,
g'\in\Step_{D'}, \sigma\in\Gamma),
\]
where $\wt{\up}$ is defined by \eqref{upsilon g' g} with
$\kappa^{\dagger}$ and $\phi^{\dagger}$ in place of
$\kappa$ and $\phi$, this follows from
the form representations~\eqref{form repn} for
$k^{\kappa^{\dagger},\phi^{\dagger}}$ and $k^{\kappa,\phi}$.
\end{proof}

\begin{rems}
(i)
If $U$ is a subspace of $\Vil$ invariant under each of the maps
$\phi^{\zeta'}_\zeta$ ($\zeta'\in\kilhat, \zeta\in\Dhat$) then
$\omega_{\ve', \ve} \fullcomp k_{t}(U)\subset \ol{\kappa(U)}$
for all $\ve'\in\Exps, \ve\in\ExpsD$.

(ii)
The identification~\eqref{matrix lifting} extends as follows. If
$\phi$ has cb-columns and $\kappa$ is completely bounded then
$\hil$-matrix space liftings, of coefficient, initial condition and 
solution, are compatible:
\begin{equation} \label{kappaphihil}
(k^{\kappa,\phi}_t)^{\hil} = k^{\kappa',\phi'}_t
\end{equation}
where $\kappa' = \kappa \otM \id_{B(\hil)}$ and $\phi'$ is determined by
$\phi'_{|\zeta\ra} = (\phi_{|\zeta\ra})^{\hil}$. This follows easily
from the equality
\[ \left(\kappa \fullcomp
\phi_{| \zeta_1\ra} \fullcomp \cdots \fullcomp \phi_{| \zeta_n \ra}\right)^{\hil} =
\kappa' \fullcomp
\phi'_{| \zeta_1\ra} \fullcomp \cdots \fullcomp \phi'_{| \zeta_n \ra}
\]
(in the notation~\eqref{phi hil}and the identity
\[
\Lambda^n(T\ot L) = T\ot \Lambda^n_t(L)
\quad
(T\in B(\hil), n\in\ZZplus,
L\in B(\init;\init')\uwot B(\kilhat^{\ot n})).
\]
\end{rems}

In the next result we consider the case where the operator space $\Vil$
is concrete itself, and so the process $k^{\kappa, \phi}$ may be compared to
the process $k^{\phi}$.

\begin{propn}\label{phi to kappa phi propn}
Let $\kappa$ and $\phi$ be as in Theorem~\ref{existence} and suppose that 
the operator space $\Vil$ is concrete. Then the following hold.
\begin{alist}
\item
\[
\omega_{\ve' , \ve} \fullcomp k^{\kappa , \phi}_t =
\kappa \circ \big(\omega_{\ve' , \ve} \fullcomp k^\phi_t\big)
\quad
(\ve \in \ExpsD, \ve' \in \Exps, t \in \Rplus).
\]
\item
If $\kappa$ is completely bounded then
\[
k^{\kappa , \phi}_{t , | \ve \ra} = \kappa \fullcomp k^\phi_{t, | \ve \ra} \quad
(t \in \Rplus , \ve \in \ExpsD) .
\]
\item
If $\kappa$ is completely bounded and the process $k^\phi$ is completely bounded then
$k^{\kappa , \phi}$ is the completely bounded process given by
\[
k^{\kappa , \phi}_t = \kappa \fullcomp k^\phi_t \quad (t \in \Rplus ).
\]
\end{alist}
\end{propn}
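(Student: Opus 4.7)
The strategy is to deduce all three assertions from the form representation~\eqref{form repn} established in the proof of Theorem~\ref{existence}, together with the observation that the integrand kernel~\eqref{upsilon g' g} for $k^{\kappa,\phi}$ factors through the corresponding kernel for $k^\phi$ by left composition with $\kappa$.

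For part~(a), write $\up^{\phi;g',g}_\sigma$ for the $\kappa=\id_\Vil$ specialisation of~\eqref{upsilon g' g}, i.e.\ the kernel for $k^\phi$, which is well-defined since $\Vil$ is concrete. Then $\up^{g',g}_\sigma = \kappa \circ \up^{\phi;g',g}_\sigma$. Since $\kappa \in \kB(\Vil;\Yil)$ is in particular bounded, it commutes with the Bochner integral of these operator-valued functions over $\Gamma_{[0,t]}$, and~\eqref{form repn} immediately gives
\[
\omega_{\ve',\ve} \fullcomp k^{\kappa,\phi}_t = \kappa \circ \big(\omega_{\ve',\ve} \fullcomp k^\phi_t\big).
\]

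For part~(b), note that when $\kappa$ is completely bounded the tensor-extended composition $\kappa\fullcomp k^\phi_{t,|\ve\ra} = (\kappa\otM\id_{B(|\FFock\ra)})\circ k^\phi_{t,|\ve\ra}$ is a well-defined element of $\kB(\Vil;\YMotF)$, and the defining property of $\kappa\otM\id_{B(|\FFock\ra)}$ is precisely that $E^{\ve'}\circ(\kappa\otM\id_{B(|\FFock\ra)}) = \kappa\circ E^{\ve'}$ for every $\ve'$. Testing both sides of the claimed equality with $E^{\ve'}$ and invoking part~(a) then yields agreement for every $\ve'\in\Exps$, which suffices by totality of the exponential vectors in $\FFock$.

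For part~(c), the extra hypothesis that $k^\phi$ is completely bounded places $k^\phi_t(x)$ in $\VMotBF$ for every $x\in\Vil$, so $\kappa\fullcomp k^\phi_t = (\kappa\otM\id_{B(\FFock)})\circ k^\phi_t$ is a well-defined element of $CB(\Vil;\YMotBF)$. Restricting both sides to each column $|\ve\ra$ and invoking part~(b) gives the equality $k^{\kappa,\phi}_t = \kappa\fullcomp k^\phi_t$, since a completely bounded map into a matrix space is determined by its column actions.

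The only substantive technical point is the interchange of $\kappa$ with the Bochner integral in~\eqref{form repn}, which is a standard property of bounded linear maps; the remaining arguments are bookkeeping about slice maps and matrix-space liftings and present no real obstacle.
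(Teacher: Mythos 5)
Your proposal is correct and follows essentially the same route as the paper, which simply remarks that (a) follows from the form representation~\eqref{form repn} and that (b) and (c) are consequences of (a); you have merely supplied the (accurate) details, namely that $\up^{g',g}_\sigma = \kappa\circ\up^{\phi;g',g}_\sigma$, that the bounded map $\kappa$ passes through the integral over $\Gamma_{[0,t]}$, and that the slice/column identities together with totality of exponential vectors upgrade (a) to (b) and (c).
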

\begin{proof}
(a) follows easily from~\eqref{form repn}; (b) and (c) are simple consequences of (a).
\end{proof}

\begin{rems}
Since the process $k^{\kappa,\phi}$ depends linearly on $\kappa$, the proposition implies that
it also depends continuously on its initial condition --- in various senses, depending on
the regularity of the initial condition and process $k^\phi$.

If $\Vil = \Yil$ and the initial condition commutes with the coefficient 
operator,
in the sense that
$\kappa\fullcomp\phi_{|\zeta\ra}
= \phi_{|\zeta\ra}\circ\kappa$
($\zeta\in\Dhat$),
then
$\kappa\fullcomp\phi^{\fullcomp n}_{|\eta\ra}
= \phi^{\fullcomp n}_{|\eta\ra}\circ\kappa$
  ($n\in\ZZplus, \eta\in\Dhat^{\ulot n}$)
and so
\[
k_t^{\kappa,\phi} = k_t^{\phi}\circ\kappa
\quad (t\in\Rplus).
\]

Injectivity of the quantum stochastic operation $\Lambda$
(\cite{LWjlms}, Proposition 2.3) implies that
\[
k^{\kappa,\phi}= k^{\kappa',\phi'}
\text{ if and only if }
\kappa = \kappa' \text{ and }
\kappa\fullcomp\phi_{|\zeta\ra}
= \kappa'\fullcomp\phi'_{|\zeta\ra}
\quad
(\zeta\in\Dhat).
\]
\end{rems}

\section{Localisable equations} \label{localisable sec}

In this section we consider the case where the source space is a vector
space on which the coefficient map of the  quantum stochastic
differential equation is finitely localisable. Thus let $V$
be a complex vector space, let $D$ be a dense subspace
of the noise dimensions space $\noise$ and consider
our quantum stochastic differential equation~\eqref{QSDE}
\[
dk_t = k_t \fullcomp d \Lambda_\phi (t) , \quad k_0 = \iota \circ \kappa ,
\]
where $\phi \in L \big( \Dhat ; L(V ; V \ulot | \kilhat \ra ) \big) $.
We consider two cases.
Recall that if $\phi$ is finitely localisable then it necessarily belongs to
$L(V;V\ulot\Op(\Dhat))$; also recall the notation~\eqref{Dstar}.

\begin{thm} \label{local existence}
Let $\phi \in L(V;V\ulot\Op(\Dhat))$
be finitely localisable
and let $\kappa \in L(V;\Yil )$, where $\Yil$ is an operator space in $B(\init ; \init')$. Set
$\Ddense = \init \ulot \ExpsD$.
Then there is a process
$k \in \Proc (V \to \Yil : \Ddense , \againDdense_*)$,
which is a $\Ddense$-strong
solution of \eqref{QSDE} and enjoys the following further properties:
\begin{alist}
\item
\label{lex a}
$k$ is $L(V;\Yil\ulot\Op(\ExpsD))$-valued.
\item
\label{lex b}
The map
$s \mapsto k_{s, | \ve \ra} (x)$ is locally
H\"{o}lder-continuous
$\Rplus \to \Yil \spot | \FFock \ra$ with exponent $\frac12$
\tu{(}$x \in V, \ve\in\ExpsD$\tu{)}.
\item
\label{lex c}
If $\wt{k}$ is a $(\againDdense_1 , \Ddense_1)$-weak solution of
\eqref{QSDE}, where
$\Ddense_1$ and $\againDdense_1$ are exponential domains contained in
$\Ddense$ and $\againDdense_*$
respectively, then $\wt{k}$ is a restriction of
$k$: $\wt{k}_t (x) = k_t (x) |_{\Ddense_1}$.
\item
\label{lex d}
For any subspace $V_1$ localising $\phi$,
$k_{t} (V_1) \subset \kappa(V_1) \ulot \Op(\ExpsD)$
$(t\in\Rplus)$.
\end{alist}
\end{thm}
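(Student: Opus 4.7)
The plan is to reduce to the operator-space setting of Theorem~\ref{existence} by restricting $\phi$ and $\kappa$ to each finite-dimensional subspace $V_1\subset V$ that localises $\phi$. For any such $V_1$, fix any operator space structure on it (for instance via an identification $V_1 \cong M_d$ with $d=\dim V_1$); since $V_1$ is finite-dimensional, every linear map with domain $V_1$ is automatically completely bounded. The localising property $\phi_{|\zeta\ra}(V_1)\subset V_1\ulot|\kilhat\ra$ together with the natural identification $V_1\ulot|\kilhat\ra \cong CB(\la\kilhat|;V_1)$ places $\phi|_{V_1}$ in $L\big(\Dhat; \kB(V_1; CB(\la\kilhat|;V_1))\big)$, while $\kappa|_{V_1}$ trivially lies in $\kB(V_1; \Yil)$. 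Theorem~\ref{existence} thus yields a $\Ddense$-strong solution $k^{V_1} \in \Proc(V_1\to\Yil : \Ddense,\againDdense_*)$ of~\eqref{QSDE}, together with an associated pointwise quantum stochastically integrable process $K^{V_1}$ satisfying~\eqref{strong}.

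These solutions are mutually compatible. If $V_1\subset V_2$ both localise $\phi$, invariance of $V_1$ under each $\phi^{\zeta'}_\zeta$ shows that $k^{V_2}|_{V_1}$ is a weak solution of the QSDE determined by $\phi|_{V_1}$ and $\kappa|_{V_1}$; since $V_1$ is finite-dimensional, Corollary~\ref{uniqueness cor} forces $k^{V_2}|_{V_1} = k^{V_1}$. As $V$ is the union of such subspaces $V_1$, this permits the consistent definition $k_t(x) := k^{V_1}_t(x)$ for any localising finite-dimensional $V_1$ containing $x$; the analogous gluing of the $K^{V_1}$ yields a pointwise quantum stochastically integrable process $K$ on $V$ which, paired with $k$, satisfies~\eqref{strong} and~\eqref{integral eqn}, exhibiting $k$ as a $\Ddense$-strong solution of~\eqref{QSDE}. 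Local H\"older continuity (property (b)) and the restriction-uniqueness statement (property (c)) transfer directly from Theorem~\ref{existence} and Corollary~\ref{uniqueness cor} applied to a localising $V_1$ containing the given $x$.

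Property (d) is the delicate point. The form representation~\eqref{form repn} from the proof of Theorem~\ref{existence},
\[
e^{-\la g', g\ra}\,\omega_{\ve(g'), \ve(g)}\fullcomp k^{V_1}_t
= \int_{\Gamma_{[0,t]}} d\sigma\, \up^{g', g}_\sigma ,
\]
combined with the $\phi^{\zeta'}_\zeta$-invariance of $V_1$, forces each composition $\up^{g',g}_\sigma$ to send $V_1$ into $\kappa(V_1)$, and hence $\omega_{\ve',\ve}\fullcomp k^{V_1}_t(V_1)\subset \kappa(V_1)$. The main obstacle is promoting this scalar-slice statement to the operator-level factorisation $k^{V_1}_t(x) \in \kappa(V_1)\ulot \Op(\ExpsD)$ required by property (d). I would handle this by choosing a basis $y_1,\ldots,y_m$ of $\kappa(V_1)\subset B(\init; \init')$ and dual functionals of the form $\mu_j(A) = \sum_k \la u'_{jk}, A u_{jk}\ra$ on $\Yil$ with $\mu_j(y_i)=\delta_{ij}$, then defining $S_j^{t,x}\in \Op(\ExpsD; \FFock)$ by
\[
S_j^{t,x}\ve := \sum_k (\la u'_{jk}|\ot I_{\FFock})\, k^{V_1}_t(x)(u_{jk}\ve)
\]
and verifying the identity $k^{V_1}_t(x) = \sum_{j=1}^m y_j \ulot S_j^{t,x}$ by direct pairing against test vectors $u'\ve'$. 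This delivers property (d), and property (a) then follows immediately since $\kappa(V_1)\subset\Yil$.
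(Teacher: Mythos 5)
Your proposal follows essentially the same route as the paper: restrict $\kappa$ and $\phi$ to finite-dimensional localising subspaces (on which any operator space structure makes the hypotheses of Theorem~\ref{existence} hold), apply that theorem, and patch the local solutions together via the uniqueness statement; your dual-basis argument for promoting the slice inclusion $\omega_{\ve',\ve}\fullcomp k_t(V_1)\subset\kappa(V_1)$ to $k_t(V_1)\subset\kappa(V_1)\ulot\Op(\ExpsD)$ is correct and merely spells out what the paper delegates to a remark. The one point to tidy is consistency of the definition for two localising subspaces containing $x$ that are not nested: pass to $V_1\cap V_2$ (or $V_1+V_2$), which again localises $\phi$ --- this is exactly how the paper handles it.
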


\begin{proof}
Consider a finite dimensional subspace $V_1$ of $V$ which localises $\phi$ and let $\kappa_1$
and $\phi_1$ be the restrictions of $\kappa$ and $\phi$ to $V_1$. By endowing $V_1$ with operator
space structure $\kappa_1$ becomes completely bounded and $\phi_1$ enjoys completely bounded columns.
Theorem~\ref{existence} therefore permits us to define a process
$k^1 \in \Proc (V_1 \to \Yil :\Ddense , \againDdense )$ by $k^1 =
k^{\kappa_1 , \phi_1}$. Now
suppose that $k^2 \in \Proc (V_2 \to \Yil : \Ddense , \againDdense )$ is the process arising in
this way from another finite dimensional subspace $V_2$ localising $\phi$.
Then the finite
dimensional subspace $V_3 := V_1 \cap V_2$ also localises $\phi$ and so gives rise to a third
process $k^3 \in \Proc (V_3 \to \Yil : \Ddense , \againDdense)$. By the uniqueness part of
Theorem~\ref{existence} it follows that $k^3$ agrees with both $k^1$ and $k^2$ on $V_3$. The following
prescription therefore gives a consistent definition of a process
$k \in \Proc (V \to \Yil : \Ddense , \againDdense )$: let
$k_t (x) =
k^{\kappa_1, \phi_1}_t (x) $ where $\kappa_1$ and $\phi_1$
are the restrictions of $\kappa$ and $\phi$ to any finite dimensional
subspace of $V$ containing
$x$ which localises $\phi$. That $k$ is a $\Ddense$-strong solution of \eqref{QSDE}
satisfying properties (\ref{lex a})-(\ref{lex d}) now
follows easily from Theorem~\ref{existence} and the subsequent remark. Observe that
(\ref{lex d}) implies that for each $s \geq 0$ and $\ve \in \ExpsD$ the map $k_{s,|\ve\ra}$
takes values in $\Yil \ulot |\FFock\ra$.
\end{proof}

\begin{rem}
Clearly the following weaker localisable property suffices: for all $x \in 
V$ and $F \subset \subset D$
there is a finite dimensional subspace $V_1$ of $V$ containing $x$ such that
$\phi_{| \zeta \ra} (V_1) \subset V_1 \ulot | \kilhat \ra$ for all $\zeta 
\in \wh{F}$;
conclusion~(\ref{lex d}) is then modified accordingly.
\end{rem}

\noindent
\emph{Notation}. We again use the notation $k^{\kappa,\phi}$ for the process
obtained in the above theorem.

As before,
\[
k^{\kappa,\phi}= k^{\kappa',\phi'}
\text{ if and only if }
\kappa = \kappa' \text{ and }
\kappa\fullcomp\phi
= \kappa'\fullcomp\phi'.
\]

\begin{cor} \label{algebraic conjugate}
Suppose that $\phi\in L(V;V\ulot\Op^{\ddagger}(\Dhat,\wh{D'}))$
for some dense subspace $D'$ of $\kil$. Then
$k^{\kappa,\phi}\in\Proc^{\ddagger}(V:\Ddense, \againDdense)$ where
$\againDdense = \init'\ulot\Exps_{D'}$ and
$(k^{\kappa,\phi})^{\dagger} =
k^{\kappa^{\dagger},\phi^{\dagger}}$.
\end{cor}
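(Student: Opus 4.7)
The plan is to reduce to the finite-dimensional operator-space setting of Corollary~\ref{conjugate of k phi cor}, then patch the conclusions as in the construction of Theorem~\ref{local existence}.

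First I would verify that $\phi^{\dagger}$ is itself finitely localisable, so that Theorem~\ref{local existence} applies to the pair $(\kappa^{\dagger},\phi^{\dagger})$ and the process $k^{\kappa^{\dagger},\phi^{\dagger}}$ on $V^{\dagger}$ is well-defined. The hypothesis $\phi\in L(V;V\ulot\Op^{\ddagger}(\Dhat,\wh{D'}))$ together with the natural identification $\Op^{\ddagger}(\Dhat,\wh{D'})^{\dagger} = \Op^{\ddagger}(\wh{D'},\Dhat)$ yields $\phi^{\dagger}\in L(V^{\dagger};V^{\dagger}\ulot\Op^{\ddagger}(\wh{D'},\Dhat))$. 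Moreover, if $V_1$ is a finite-dimensional subspace of $V$ localising $\phi$, then componentwise conjugation of $\phi(V_1)\subset V_1\ulot\Op^{\ddagger}(\Dhat,\wh{D'})$ shows that $V_1^{\dagger}$ localises $\phi^{\dagger}$; hence $V^{\dagger} = \bigcup V_1^{\dagger}$ as $V_1$ ranges over the finite-dimensional localising subspaces of $\phi$.

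Next, for each $x\in V$, I would pick a finite-dimensional $V_1\subset V$ containing $x$ and localising $\phi$, and equip it with an arbitrary operator space structure; then $V_1^{\dagger}$ inherits the conjugate structure. Writing $\kappa_1,\phi_1$ for the restrictions of $\kappa$ and $\phi$ to $V_1$, both $\kappa_1$ and the columns of $\phi_1$ are automatically completely bounded by finite-dimensionality, and the restrictions of $\kappa^{\dagger}$ and $\phi^{\dagger}$ to $V_1^{\dagger}$ are precisely $\kappa_1^{\dagger}$ and $\phi_1^{\dagger}$. Corollary~\ref{conjugate of k phi cor} therefore yields $(k^{\kappa_1,\phi_1})^{\dagger} = k^{\kappa_1^{\dagger},\phi_1^{\dagger}}$. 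By the patching prescription in the proof of Theorem~\ref{local existence}, $k^{\kappa,\phi}_t(x) = k^{\kappa_1,\phi_1}_t(x)$ and $k^{\kappa^{\dagger},\phi^{\dagger}}_t(x^{\dagger}) = k^{\kappa_1^{\dagger},\phi_1^{\dagger}}_t(x^{\dagger})$, so combining these identities delivers both $k^{\kappa,\phi}_t(x) \in \Op^{\ddagger}(\Ddense,\againDdense)$ (hence $k^{\kappa,\phi}\in\Proc^{\ddagger}(V:\Ddense,\againDdense)$) and the identity $(k^{\kappa,\phi})^{\dagger} = k^{\kappa^{\dagger},\phi^{\dagger}}$.

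The main delicate point is the first paragraph: checking that the localising structure for $\phi$ transfers cleanly to a localising structure for $\phi^{\dagger}$ via $V_1\mapsto V_1^{\dagger}$. Once this transfer is secured, the remainder is essentially bookkeeping that reduces to the finite-dimensional conjugate result via the consistency of the patching construction of Theorem~\ref{local existence} (which guarantees independence of the choice of $V_1$ both for $k^{\kappa,\phi}$ and for $k^{\kappa^{\dagger},\phi^{\dagger}}$).
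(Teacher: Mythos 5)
Your proposal is correct and is essentially the argument the paper intends: the corollary is meant to follow from the patching construction in the proof of Theorem~\ref{local existence} combined with Corollary~\ref{conjugate of k phi cor} applied on each finite-dimensional localising subspace, exactly as you do. Your explicit check that $V_1\mapsto V_1^{\dagger}$ carries localising subspaces of $\phi$ to localising subspaces of $\phi^{\dagger}$ is the only point the paper leaves entirely tacit, and you handle it correctly.
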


We next give a variant of the above existence theorem. Note that the 
definition of
$\Proc (V \to \Yil : \Ddense , \againDdense)$ extends in an obvious way if $\Yil$ is replaced
by $W=\Op (\Domain ; \init')$ and $\Ddense$ by $\Domain \ulot \ExpsD$.

\begin{thm} \label{more local existence}
Let $\phi \in L(V;V\ulot\Op(\Dhat))$
be finitely localisable, let $\kappa \in L(V; W)$
and set $\Ddense = \Domain \ulot \ExpsD$.
Then the conclusions of Theorem~\ref{local existence} hold
with
$\Yil$ replaced by $W$ and
\tu{(}\ref{lex a}\tu{)}, \tu{(}\ref{lex b}\tu{)} and
\tu{(}\ref{lex d}\tu{)}
replaced by
\begin{alist}
\item[\tu{(a)}$'$]  \label{mle a}
$s \mapsto k^{\kappa , \phi}_s (x) \xi$ is locally H\"{o}lder-continuous
$\Rplus \to \init' \ot \FFock$ with exponent $\frac12$, for all $x \in V$ and $\xi \in \Ddense$.
\end{alist}
\end{thm}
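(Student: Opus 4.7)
The plan is to mirror the proof of Theorem~\ref{local existence}, the only new wrinkle being that the target space $W=\Op(\Domain;\init')$ of $\kappa$ carries no natural operator-space structure, so Theorem~\ref{existence} cannot be applied verbatim to the restrictions of $\kappa$ to finite-dimensional localising subspaces. The remedy is to carry out the iterated-integral construction pointwise in the initial test vector $u\in\Domain$, using the fact that for any finite-dimensional $V_1\subset V$ the evaluation map $\kappa_u\colon V_1\to|\init'\ra$, $v\mapsto\kappa(v)u$, is automatically cb.

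In detail: fix $x\in V$, pick a finite-dimensional $V_1\ni x$ localising $\phi$, and equip $V_1$ with an arbitrary operator-space structure. The restriction $\phi_1:=\phi|_{V_1}$ then lies in $L\big(\Dhat;CB(V_1;V_1\otM|\kilhat\ra)\big)$ trivially, and $V_1\otM|\kilhat\ra=V_1\ulot|\kilhat\ra$. Define multilinear maps $\upsilon^n$ on $V_1$ by the formula~\eqref{upsilon} with $\kappa$ and $\phi$ replaced by $\kappa|_{V_1}$ and $\phi_1$, so that for $v\in V_1$ and $u\in\Domain$,
\[
\upsilon^n_{|\zeta_1\ot\cdots\ot\zeta_n\ra}(v)\,u=\kappa_u\fullcomp\phi_{1,|\zeta_n\ra}\fullcomp\cdots\fullcomp\phi_{1,|\zeta_1\ra}(v)\in|\init'\ot\kilhat^{\ot n}\ra.
\]
Sub-multiplicativity of the cb-norm under the $\fullcomp$ operation yields
\[
\big\|\upsilon^n_{|\zeta_1\ot\cdots\ot\zeta_n\ra}(v)\,u\big\|\leq\|\kappa_u\|_{\cb}\,\|v\|_{V_1}\prod_{i=1}^n\|\phi_{1,|\zeta_i\ra}\|_{\cb},
\]
which gives the growth condition required for the iterated quantum stochastic integral of $\upsilon(x):=(\upsilon^n(x))_{n\geq 0}$. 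Setting $k^1_t(x):=\Lambda_t(\upsilon(x))$ therefore defines an operator in $\Op(\Ddense;\init'\ot\FFock)$, linear in $x\in V_1$; the estimates~\eqref{FEfull}--\eqref{FDEfull} give local boundedness of $s\mapsto k^1_s(x)u\ve(g)$ and its local H\"older continuity of exponent $\tfrac12$ as a map $\Rplus\to\init'\ot\FFock$, which is property~(a)$'$. The form representation~\eqref{form repn}, applied slicewise and telescoped as in the proof of Theorem~\ref{existence}, shows that $k^1$ is a $(\againDdense_*,\Ddense)$-weak solution of the QSDE restricted to $V_1$; setting $K^1_{t,|\zeta\ot\ve\ra}(x):=k^1_{t,|\ve\ra}\fullcomp\phi_{1,|\zeta\ra}(x)$ --- a finite sum since $\phi_{1,|\zeta\ra}(x)\in V_1\ulot|\kilhat\ra$ with $V_1$ finite-dimensional --- exhibits $k^1$ as a $\Ddense$-strong solution.

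Consistency for different finite-dimensional localising subspaces $V_1,V_2\ni x$ follows by applying Corollary~\ref{uniqueness cor} to $V_3:=V_1\cap V_2$, which still localises $\phi$ and is finite-dimensional; finite localisability also guarantees that $V^\phi_{F',\{x\},F}$ is finite-dimensional for any $F\subset\subset D$ and $F'\subset\subset D'$, so this corollary applies throughout. This yields a well-defined process $k\in\Proc(V\to W:\Ddense,\againDdense_*)$ with the required strong-solution and continuity properties, and the same corollary delivers~(c). The main obstacle is the first step: adapting the matrix-space iterated-integral machinery of Theorem~\ref{existence} to a setting where $\kappa(V)$ lies only in $\Op(\Domain;\init')$. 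The key observation that unblocks it is that freezing the test vector $u\in\Domain$ reduces the norm estimates to cb bounds into the column operator space $|\init'\ra$, after which the standard growth condition and H\"older estimate for iterated quantum stochastic integrals go through unchanged.
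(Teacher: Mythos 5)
Your argument is correct, but it is organised differently from the paper's. The paper freezes the initial-space vector $u\in\Domain$ at the level of the equation itself: for each $u$ it applies Theorem~\ref{local existence} with $\Yil=|\init'\ra$ and initial condition $x\mapsto|\kappa(x)u\ra$, obtaining for each $u$ a process $l^u$ defined on the Fock domain $\ExpsD$ alone. The price of that reduction is that linearity of $u\mapsto l^u_t(x)\ve(g)$ is not automatic, and the paper recovers it by a Gronwall-type iteration of the weak-solution integral equation for the difference $l^u_s(x)+\lambda l^v_s(x)-l^{u+\lambda v}_s(x)$, with finite localisability ensuring the iteration takes place in a finite-dimensional subspace; only then can the $l^u$ be glued into an operator-valued process. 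You instead localise first and keep $u$ inside the domain of the integrands $\upsilon^n(x)\in\Op(\Domain\ulot\Dhat^{\ulot n};\init'\ot\kilhat^{\ot n})$ throughout, invoking the frozen-$u$ maps $\kappa_u\colon V_1\to|\init'\ra$ (automatically completely bounded on a finite-dimensional operator space) only to verify the growth condition and the H\"older estimate. This makes linearity in $u$ automatic and dispenses with the paper's $\gamma$-iteration, at the cost of re-running the iterated-integral construction and the telescoping weak-solution computation of Theorem~\ref{existence} rather than citing Theorem~\ref{local existence} wholesale. Both routes rest on the same two observations --- finite localisability and the reduction to the column space $|\init'\ra$ --- and your consistency and uniqueness step via Corollary~\ref{uniqueness cor}, applied on the intersection of two localising subspaces, matches the paper's.
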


\begin{proof}
For $u \in \Domain$, Theorem~\ref{local existence} applies, with 
$\Yil = | \init' \ra$, to the quantum stochastic differential equation
\[
dk_t =
k_t \fullcomp d \Lambda_\phi (t), \quad k_0 = \iota \circ \kappa_{| u\ra};
\]
Let $l^u\in\Proc\big(V\to |\init'\ra:\Exps_D,\init'\ulot\Exps\big)$ 
be its $\ExpsD$-strong solution.
For $u,v \in \Domain$ and $\lambda \in \Comp$,
if $g \in \ExpsD$ and $\xi' = v'\ve(g') \in \againDdense$ then the maps
$\gamma_s : V \to \Comp$ $(s \in \Rplus)$ given by
\[
\gamma_s (x) = \Big\la \xi' , \big[l^u_s (x) + \lambda l^v_s (x) -
l^{(u+\lambda v)}_s (x) \big]
\ve (g) \Big\ra
\]
satisfy
\[
\gamma_t (x) =
\int^t_0 ds \, \gamma_s \big( \phi^{\wh{g'}(s)}_{\ghat (s)} (x) \big)
\quad (x \in V , t \in \Rplus) .
\]
In view of finite localisability, iteration shows that $\gamma$ is identically zero. If
follows that
\[
k^{\kappa , \phi}_t (x) u \ve (g) := l^u_t (x) \ve (g)  \quad
(x \in V, u \in \Domain, g \in \Step_D , t \in \Rplus) ,
\]
defines a process
$k^{\kappa ,\phi} \in \Proc (V \to W : \Ddense , \againDdense )$ which is
a $\Ddense$-strong solution of \eqref{QSDE}; it is clear that it satisfies
(a)$'$ and (\ref{lex c}) too.
\end{proof}

\section{Quantum stochastic cocycles} \label{cocycles sec}

In this section we give a new result on the infinitesimal generation of
quantum stochastic cocycles (cf.~\cite{LWjfa}).
At the end we describe how the result may be applied to quantum stochastic
convolution cocycles on a coalgebra (\cite{LSaihp}).
Fix an operator space $\YVil$ in $B(\init ; \init')$ and exponential
domains $\Ddense = \hED$ and $\againDdense = \againhED$.

The following notations for a process
$k \in \Proc (\YVil \to \YVil : \Ddense , \againDdense )$ prove useful:
\begin{equation}\label{kfg}
k^{g',g}_t :=
e^{-\la g'_{[0,t[} ,g_{[0,t[} \ra}
\omega_{\ve (g'_{[0,t[}), \ve(g_{[0,t[})} \fullcomp k_t
\end{equation}
($g' \in \againStepD , g \in \StepD, t \in \Rplus$) and
\begin{equation}\label{kcd}
k^{c',c}_t :=
k^{c'_{[0,t[},c_{[0,t[}} \quad (c'\in D', c\in D).
\end{equation}
Thus $k^{g',g}_t\in L(\YVil)$ and the process is called \emph{initial 
space
bounded} if each map $k^{g',g}_t$ is bounded (cf.\,the condition of having
bounded columns).

\begin{defn}
A process $k \in \Proc (\YVil \to \YVil : \Ddense, \againDdense )$
is a $(\againDdense, \Ddense)$-\emph{weak quantum stochastic cocycle}
on $\YVil$ if it satisfies
\begin{equation}\label{cocycle reln}
k^{g',g}_{r+t} = k^{g',g}_r \circ k^{S^*_r g', S^*_r g}_t
\end{equation}
for all $g' \in \againStepD$, $r,t\in\Rplus$ and $g \in \StepD$,
where $(S_t)_{t\geq0}$ is the (isometric) right-shift semigroup on
$L^2(\Rplus;\noise)$.
\end{defn}
\noindent Let $\QSC(\YVil:\Ddense,\againDdense)$ denote the collection of 
these.
Also define
\[
\QSC^\ddagger(\YVil:\Ddense,\againDdense) =
\QSC(\YVil:\Ddense,\againDdense) \cap
\Proc^\ddagger(\YVil \to \YVil:\Ddense,\againDdense);
\]
if $k$ is in this class then
$k^{\dagger\,g,g'}_t = (k_t^{g',g})^\dagger$
and it is easily seen that the conjugate process
$k^\dagger$ is a cocycle on $\YVil^\dagger$.

In case the process has
cb-columns (each map $x\mapsto k_{t, |\ve \ra}(x)$ is completely bounded
$\YVil\to\VMotF$)
the cocycle relation is equivalent to
\[
k_{r+t, | \ve (g_{[0,r+t[}) \ra} = k_{r,|\ve (g_{[0,t[})\ra}
\fullcomp k_{t, | \ve(S^*_r g_{[r,r+t[})\ra} ;
\]
in case the process itself is completely bounded it simplifies further, to
the more recognisable cocycle property:
\[
k_{r+t} = k_r \fullcomp \sigma_r \fullcomp k_t
\]
were $(\sigma_r)_{r \geq 0}$ is the CCR flow of index $\noise$
(\cite{Arveson}).

\begin{lemma}\label{sgp decomp lem}
Let $k \in \Proc (\YVil \to \YVil : \Ddense, \againDdense)$ and define
$P^{c',c} := (k^{c',c}_t)_{t\geq 0}$ \tu{(}$c',c\in\kil$\tu{)}. Then the 
following are equivalent\tu{:}
\begin{rlist}
\item
$k\in\QSC (\YVil :\Ddense,\againDdense)$.
\item
For all $c'\in D'$ and $c\in D$, $P^{c',c}$ is a one-parameter semigroup in 
$L(\YVil)$
and, for all $g' \in \againStepD , g \in \StepD$ and $t \in \Rplus$,
$k^{g',g}_t = l^{g',g}_t$ where
\begin{equation}\label{lfg}
l^{g',g}_t =
P^{g'(t_0),g(t_0)}_{t_1 -t_0} \cdots P^{g'(t_n),g(t_n)}_{t_{n+1} -t_{n}}
\end{equation}
with $n\in\ZZplus$, $t_0=0$, $t_{n+1}=t$ and $\{t_1 < \cdots < t_n\}$
being precisely the \tu{(}possibly empty\tu{)} union of the sets of points
of discontinuity of $g'$ and $g$ in $]0,t[$.
\item
For all $g' \in \againStepD , g \in \StepD$ and $t \in \Rplus$,
\begin{equation} \label{sgp decomp}
k^{g',g}_t =
P^{g'(t_0),g(t_0)}_{t_1 -t_0} \cdots P^{g'(t_n),g(t_n)}_{t_{n+1} -t_{n}}
\end{equation}
whenever $n\in\ZZplus$ and $\{ 0=t_0 \leq \cdots \leq t_{n+1} =t\}$
includes all the discontinuities of $g'_{[0,t[}$ and $g_{[0,t[}$.
\end{rlist}
\end{lemma}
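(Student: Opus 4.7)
I will prove the equivalences by establishing the chain (i) $\Rightarrow$ (ii) $\Rightarrow$ (iii) $\Rightarrow$ (i).

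For (i) $\Rightarrow$ (ii), first apply the cocycle relation~\eqref{cocycle reln} to the constant-valued step functions $g = c_{[0,r+t[}$ and $g' = c'_{[0,r+t[}$ with $c \in D$, $c' \in D'$. Because $k^{\cdot,\cdot}_s$ depends only on the restriction of its test functions to $[0,s[$, and since $S^*_r c_{[0,r+t[} = c_{[0,t[}$ and likewise for $c'$, the cocycle relation collapses to $P^{c',c}_{r+t} = P^{c',c}_r \circ P^{c',c}_t$; thus each $P^{c',c}$ is a one-parameter semigroup in $L(\YVil)$. For arbitrary $g' \in \Step_{D'}, g \in \StepD$, let $t_1 < \cdots < t_n$ enumerate the union of the discontinuities of $g', g$ in $]0,t[$. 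Apply the cocycle relation at $t_1$, then iterate on the residual cocycle $k^{S^*_{t_1} g', S^*_{t_1} g}$ at time $t_2 - t_1$, and so on; on each interval $[t_i, t_{i+1}[$ the (shifted) step functions are constant, so by the computation above the corresponding factor equals $P^{g'(t_i), g(t_i)}_{t_{i+1}-t_i}$. This yields formula~\eqref{lfg}.

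For (ii) $\Rightarrow$ (iii), take any partition $\{0 = t_0 \leq \cdots \leq t_{n+1} = t\}$ that contains all discontinuities of $g'_{[0,t[}, g_{[0,t[}$. At any interior node $t_i$ which is not a true discontinuity of either $g'$ or $g$, one has $g'(t_{i-1}) = g'(t_i)$ and $g(t_{i-1}) = g(t_i)$, so the semigroup property merges the two neighbouring factors into the single factor $P^{g'(t_{i-1}), g(t_{i-1})}_{t_{i+1}-t_{i-1}}$. Eliminating such redundant nodes one by one reduces~\eqref{sgp decomp} to the minimal product~\eqref{lfg}, which equals $k^{g',g}_t$ by (ii).

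For (iii) $\Rightarrow$ (i), fix $g' \in \Step_{D'}, g \in \StepD$ and $r, t \geq 0$, and choose a partition of $[0,r+t]$ that contains $r$ together with all discontinuities of $g'_{[0,r+t[}, g_{[0,r+t[}$ in $]0,r+t[$. The product supplied by~\eqref{sgp decomp} for $k^{g',g}_{r+t}$ splits at $r$: the left half is the product for the induced partition of $[0,r]$, which equals $k^{g',g}_r$ by (iii); the right half, after re-indexing via the shift $S^*_r$, is a product over a partition of $[0,t]$ containing all discontinuities of $S^*_r g'_{[0,t[}, S^*_r g_{[0,t[}$, and thus equals $k^{S^*_r g', S^*_r g}_t$ by a second application of (iii). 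Composition yields exactly~\eqref{cocycle reln}.

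The main obstacle is the bookkeeping in (i) $\Rightarrow$ (ii), namely verifying that the iterated use of the cocycle relation at successive jump times produces precisely~\eqref{lfg} with the correct time shifts and constant test-function values on each subinterval; the two remaining implications are essentially formal once the semigroup property has been isolated.
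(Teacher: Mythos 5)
Your proof is correct, and it is precisely the ``straightforward'' argument that the paper omits (its proof is just a citation to [LW$_2$]): isolate the semigroup law by feeding constant step functions into the cocycle relation, iterate the cocycle identity over the jump times to get \eqref{lfg}, absorb redundant partition points via the semigroup property to pass between \eqref{lfg} and \eqref{sgp decomp}, and split a partition of $[0,r+t]$ at $r$ to recover \eqref{cocycle reln}. All the key points --- that $k^{g',g}_t$ depends only on $g'_{[0,t[}$, $g_{[0,t[}$, and that $S^*_r$ acts as the left shift on the test functions --- are correctly used, so nothing is missing.
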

\noindent
\begin{proof}
Straightforward, see~\cite{LWjfa}.
\end{proof}

The one-parameter semigroups
$\{ P^{c',c} : c'\in D', c\in D\}$
in $L(\YVil)$ are referred to as the \emph{associated semigroups}
of $k$, $P^{0,0}$ as its \emph{Markov semigroup}
and~\eqref{sgp decomp} as its \emph{semigroup decomposition}.
If $k$ is initial space bounded and each semigroup is norm continuous
$\Rplus\to B(\YVil)$ then the cocycle is called \emph{Markov-regular}. When
the cocycle is contractive, norm continuity of any of the associated
semigroups (such as its Markov semigroup) implies Markov-regularity
(\cite{LWjfa}, Proposition 5.4). In view of the semigroup decomposition,
Markov-regular cocycles are necessarily both weakly regular and weakly
continuous processes.

Now consider the quantum stochastic differential equation~\eqref{QSDE}
where $\kappa = \id_{\YVil}$:

\begin{equation}\label{id QSDE}
dk_t = k_t \fullcomp d\Lambda_\phi(t), \quad k_0 = \iota.
\end{equation}
The following result is a coordinate-free counterpart to Proposition 5.2
of~\cite{LWjfa} in the operator space setting.

\begin{thm}\label{H thm}
Let $\phi \in SL\big( \wh{D'},\Dhat; B(\YVil)\big)$ and let
$k\in\Procphiwr(\YVil\to\YVil:\againDdense, \Ddense)$ be a
$(\Ddense, \againDdense)$-weak solution of the quantum stochastic
differential equation~\eqref{id QSDE}.
Then $k$ is a Markov-regular quantum stochastic cocycle and the generators
of its associated semigroups are given by
\begin{equation}\label{psi and phi}
\psi_{c',c}=\phi^{\wh{c'}}_{\wh{c}} \qquad (c'\in D', c\in D).
\end{equation}
\end{thm}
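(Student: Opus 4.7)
The strategy is to specialise to constant step-function exponentials to identify each associated semigroup $P^{c',c}$ as a norm-continuous one-parameter semigroup generated by $\phi^{\wh{c'}}_{\wh{c}}$, and then to bootstrap to arbitrary step-function exponentials via the semigroup decomposition criterion of Lemma~\ref{sgp decomp lem}(iii).

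Fix $c\in D$ and $c'\in D'$. Applying the weak solution identity~\eqref{w soln} to $g=c_{[0,T[}$ and $g'=c'_{[0,T[}$ for some $T>t$, using adaptedness of $k$ to strip the $[t,T[$-Fock factors from each matrix element, and dividing by $e^{t\la c',c\ra}$, one obtains the Volterra integral equation
\[
\la v', k^{c',c}_t(x) v\ra = \la v',xv\ra +\int_0^t \la v', k^{c',c}_s(\psi_{c',c}(x))v\ra\,ds,
\]
where $\psi_{c',c}:=\phi^{\wh{c'}}_{\wh{c}}\in B(\YVil)$ by hypothesis. Iterating this equation, the remainder after $n$ steps involves $k^{c',c}_s$ applied to $\psi_{c',c}^n(x)$, each of which lies in the subspace $V^\phi_{\{c'\},\{x\},\{c\}}$ on which $k$ is weakly regular by assumption; combined with the boundedness of $\psi_{c',c}$ on $\YVil$ (so that $\|\psi_{c',c}^n(x)\|$ grows at most geometrically) the remainder vanishes as $n\to\infty$, yielding $P^{c',c}_t=e^{t\psi_{c',c}}$. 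In particular $P^{c',c}$ is a norm-continuous one-parameter semigroup in $B(\YVil)$ with generator $\psi_{c',c}$, establishing~\eqref{psi and phi}.

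To verify the semigroup decomposition~\eqref{sgp decomp} for arbitrary $g'\in\Step_{D'}$ and $g\in\StepD$, I would argue by induction on the number $n$ of discontinuities of $g'_{[0,t[}$ and $g_{[0,t[}$ in $]0,t[$. The base case $n=0$ is the content of the previous paragraph. For the inductive step, on the last subinterval $[t_n,t_{n+1}[$ the step functions are constant with values $c':=g'(t_n)$ and $c:=g(t_n)$; applying the weak solution identity on this subinterval and invoking adaptedness of $k$ yields the Markov-type refinement
\[
k^{g',g}_{t_{n+1}} = k^{g',g}_{t_n}\circ P^{c',c}_{t_{n+1}-t_n},
\]
and the inductive hypothesis applied to $k^{g',g}_{t_n}$ completes the argument. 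Lemma~\ref{sgp decomp lem} now identifies $k$ as a weak quantum stochastic cocycle on $\YVil$. Markov-regularity follows immediately: norm continuity of $P^{0,0}_t=e^{t\phi^{\wh{0}}_{\wh{0}}}$ is clear, and the semigroup decomposition exhibits each $k^{g',g}_t$ as a composition of bounded operators, so $k$ is initial-space bounded.

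The main obstacle I foresee is the Neumann-series convergence in the second paragraph when the subspace $V^\phi_{\{c'\},\{x\},\{c\}}$ is infinite-dimensional: a careful choice of regularity norm, compatible with the operator norm of $\YVil$ on which $\psi_{c',c}$ is bounded, is needed for the iteration to converge geometrically, in the spirit of the proof of Theorem~\ref{linear-unique}(a).
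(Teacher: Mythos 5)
Your proposal is correct and follows essentially the same route as the paper: there, one defines $l^{g',g}_t$ by \eqref{lfg} with $P^{c',c}_t = e^{t\phi^{\wh{c'}}_{\wh{c}}}$ and shows $k^{g',g}_t = l^{g',g}_t$ for all step functions in one stroke, by iterating the homogeneous Volterra equation satisfied by the difference $m^{g',g}_t = k^{g',g}_t - l^{g',g}_t$ and using $\phi$-weak regularity exactly as you do, before invoking Lemma~\ref{sgp decomp lem}. Your split into a constant-coefficient Dyson-series step followed by induction on the discontinuities of $g'$ and $g$ is only an organisational variant of the same comparison, and the regularity-norm compatibility caveat you flag is equally present (implicitly) in the paper's estimate of the iterated integrand.
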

\begin{proof}
Let $\xi' = v' \ve (g') \in \againDdense , \xi \in v \ve (g) \in \Ddense$
and $t \in \Rplus$. Define $l^{g',g}_t \in B(\YVil)$ by~\eqref{lfg}
where $P^{c',c}$ is the norm continuous semigroup in $B(\YVil)$ with
generator $\phi^{\wh{c'}}_{\chat}$.
Then $m^{g',g}_t := k^{g',g}_t - l^{g',g}_t$ satisfies
\[
\big\la v' , m^{g',g}_t (x) v \big\ra = \int^t_0 ds \,
\big\la v' ,m^{g',g}_s (\phi^{\wh{g'}(s)}_{\wh{g} (s)} (x) ) v \big\ra .
\]
Iterating this gives
\begin{align*}
\lefteqn{ \big\la v' , m^{g',g}_t (x) v \big\ra} \\
& \quad = \int^t_0 ds_n \cdots \int^{s_2}_0 ds_1 (\omega_{\xi',\xi} \circ k_{s_1}
   - \omega_{v',v} \circ l^{g',g}_{s_1} )
   ( \phi^{\wh{g'} (s_1)}_{\ghat (s_1)} \circ \cdots \circ
\phi^{\wh{g'}(s_n)}_{\ghat (s_n)} ) (x) .
\end{align*}
By $\phi$-weak regularity of $k$ and norm continuity of $l^{g',g}$, the integrand has a bound
of the form $C \| x\| M^n$ where the constants $C$ and $M$ are independent of $n$. The identity
$k^{g',g}_t = l^{g',g}_t$ follows and so, by Lemma~\ref{sgp decomp lem},
$k$ is a quantum stochastic cocycle
with associated semigroups $\{ P^{c',c} : c' \in D' , c \in D\}$. This
completes the proof.
\end{proof}

It follows from~\eqref{psi and phi} that the associated semigroups are cb-norm continuous if
and only if the sesquilinear map $\phi$ is $CB (\YVil)$-valued.

\begin{rems}
Note that, in this case, the `form representation' of $k$~\eqref{form 
repn} is given by:
\[
k^{g',g}_s =
\int_{\Gamma_{[0,s]}} \, d \sigma \, \up^{g',g}_\sigma
\]
where
$\up^{g',g}_\sigma = \id_{\YVil}$ when $\sigma = \emptyset$ and
\[
\up^{g',g}_\sigma =
\phi^{\wh{g'}(s_1)}_{\wh{g}(s_1)}\circ \cdots \circ
\phi^{\wh{g'}(s_n)}_{\wh{g}(s_n)}
\text{ for } \sigma = \{s_1 < \cdots < s_n\}.
\]
In particular, if $k=k^\phi$ where
$\phi\in L\big(\Dhat; \kB(\YVil;\VMotk) \big)$ then
\[
\up^{g',g}_\sigma =
\omega_{\pi_{\wh{g'}}(\sigma),\pi_{\ghat}(\sigma)}
\fullcomp \up_{\# \sigma},
\]
where $\up = \up^\phi$ is defined by~\eqref{upsilon} with
$\kappa = \id_V$,
and the cocycle relation may be expressed as follows:
\[
\int_{\Gamma_{[0,r+t]}}\! d\sigma \, \up^{g',g}_\sigma =
\int_{\Gamma_{[0,r]}}\! d\rho
\int_{\Gamma_{[0,t]}}\! d\tau \, \up^{g',g}_\rho \circ \up^{S^*_r g',S^*_r g}_\tau .
\]
In this case the associated semigroup generators are given by
\begin{equation}\label{psi omega phi}
\psi_{c',c} = \omega_{\wh{c'},\wh{c}}\fullcomp \phi.
\end{equation}
\end{rems}

\begin{cor} \label{H cor}
Let $\phi \in L\big( \YVil;\YVil\ulot \Op (\Dhat ) \big)$ and suppose that
$\YVil$ is finite dimensional. Then $k^\phi$ is an
$L\big( \YVil;\YVil \ulot \Op(\ExpsD)\big)$-valued Markov-regular quantum
stochastic cocycle.
\end{cor}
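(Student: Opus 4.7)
The strategy is to combine the existence result of Theorem~\ref{local existence} with the cocycle characterisation of Theorem~\ref{H thm}, exploiting finite-dimensionality of $\YVil$ to render all auxiliary hypotheses automatic. Since $\YVil$ is finite-dimensional, $\phi$ is trivially finitely localisable: $V_1 = \YVil$ itself is a finite-dimensional localising subspace. Applying Theorem~\ref{local existence} with $V = \YVil$, $\Yil = \YVil$ (via any concrete realisation) and $\kappa = \id_{\YVil}$ therefore produces the process $k^\phi$, a $\Ddense$-strong solution of~\eqref{id QSDE} which, by part~(a) of that theorem, takes values in $L(\YVil ; \YVil \ulot \Op(\ExpsD))$. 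This already yields the valuation clause of the corollary.

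To obtain Markov-regularity I would feed this process into Theorem~\ref{H thm}, whose hypotheses collapse in the present setting. Finite-dimensionality of $\YVil$ means that every linear map on it is bounded, so $\phi \in SL\big(\wh{D'},\Dhat; B(\YVil)\big)$; moreover, for each $R \subset\subset \YVil$, $F \subset\subset D$, $F' \subset\subset D'$ and $t \in \Rplus$, the iterate subspace $\YVil^\phi_{F'_t, R, F_t}$ sits inside $\YVil$ and is thus finite-dimensional. Part~(b) of Theorem~\ref{local existence} gives pointwise strong (hence weak) continuity of $k^\phi$, so Proposition~\ref{linear-unique}(a) places $k^\phi$ in $\Procphiwr(\YVil \to \YVil : \Ddense, \againDdense_*)$. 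Theorem~\ref{H thm} then delivers that $k^\phi$ is a Markov-regular quantum stochastic cocycle, with associated-semigroup generators $\phi^{\wh{c'}}_{\wh{c}}$ ($c' \in D', c \in D$).

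There is no real obstacle: the corollary is a clean specialisation of the previously developed machinery in which finite-dimensionality of $\YVil$ trivialises each of the technical hypotheses (finite localisability, boundedness of the sesquilinear values of $\phi$, finite-dimensionality of the iterate spaces, local weak regularity). The only care required is in bookkeeping of the exponential domains, ensuring that the $\Ddense$-strong solution produced by Theorem~\ref{local existence} qualifies as an admissible weak solution for the cocycle characterisation theorem.
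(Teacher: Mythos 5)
Your argument is correct and follows essentially the same route as the paper: the paper invokes Theorem~\ref{existence} directly, using the linear identifications $L\big(\YVil;\YVil\ulot\Op(E)\big)=L\big(E;CB(\YVil;\YVil\otM|\Hil\ra)\big)$ valid for finite-dimensional $\YVil$, while you route through Theorem~\ref{local existence}, which in this case just amounts to applying Theorem~\ref{existence} to the trivial localising subspace $V_1=\YVil$; the Markov-regularity step via Theorem~\ref{H thm} is identical. The only blemish is a misreference: the weak-continuity-implies-$\phi$-weak-regularity statement you need is the (unnumbered) proposition of Section~\ref{regularity sec}, not Theorem~\ref{linear-unique}.
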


\begin{proof}
This follows from the theorem above and Theorem \ref{existence}
since, for finite dimensional $\YVil$, there are
natural linear identifications
\[
L\big( \YVil;\YVil \ulot \Op(E)\big) =
L\big( E ; L(\YVil;\YVil \ulot | \Hil \ra ) \big) =
L \big( E ; CB(\YVil;\YVil\otM |\Hil\ra ) \big) ,
\]
for $(E,\Hil)$ equal in turn to $(\Dhat,\kilhat)$ and $(\ExpsD,\FFock)$.
\end{proof}

We now begin to develop converse results. The first is a coordinate-free
counterpart to Theorem 5.6 of~\cite{LWjfa} in the operator space setting.

\begin{thm}
\label{cocycle thm}
Let $k \in \QSCddagger (\YVil : \Ddense, \againDdense)$ and suppose that
$k$ is Markov-regular and the maps
$t \mapsto k_t (x) \xi$ and $t \mapsto k_t (x)^* \xi'$
\tu{(}$x \in \YVil, \xi \in \Ddense, \xi' \in \againDdense$\tu{)}
are all continuous at $0$.
Then $k$ is a $(\againDdense,\Ddense)$-weak solution of the quantum
stochastic differential equation~\eqref{id QSDE} for some
$\phi \in SL\big(\wh{D'},\wh{D}; B(\YVil)\big)$.
\end{thm}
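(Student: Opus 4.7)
\emph{Proof plan.} By Markov-regularity each associated semigroup $P^{c',c}$ ($c'\in D'$, $c\in D$) is norm-continuous on $\YVil$, and so has a bounded generator $\psi_{c',c}\in B(\YVil)$. In view of Theorem~\ref{H thm}, the plan is to reverse-engineer a sesquilinear map $\phi\in SL(\wh{D'},\wh{D};B(\YVil))$ satisfying $\phi^{\wh{c'}}_{\wh{c}}=\psi_{c',c}$, and then to confirm that $k$ solves~\eqref{id QSDE} weakly with coefficient $\phi$ by a direct computation using the semigroup decomposition.

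Since every $\zeta\in\wh{D}$ splits uniquely as $\binom{1}{0}+\binom{0}{c}$ with $c\in D$, a sesquilinear $\phi$ on $\wh{D'}\times\wh{D}$ is determined by four pieces; the desired identity $\psi_{c',c}=\phi^{\wh{c'}}_{\wh{c}}$ dictates the prescription
\[
\phi^{0,0}:=\psi_{0,0},\ \phi^{0,c}:=\psi_{0,c}-\psi_{0,0},\ \phi^{c',0}:=\psi_{c',0}-\psi_{0,0},\ \phi^{c',c}:=\psi_{c',c}-\psi_{0,c}-\psi_{c',0}+\psi_{0,0}.
\]
The crux is to show that these four pieces carry the appropriate (conjugate-)linear structure, equivalently that $(c',c)\mapsto\psi_{c',c}$ depends on its arguments in an affine-sesquilinear manner. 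I establish this by computing the matrix element $\la v'\ve(c'_{[0,t[}),k_t(x)v\ve(c_{[0,t[})\ra$ for small $t$ in two ways. On the one hand, adaptedness and~\eqref{kcd} identify it as $e^{t\la c',c\ra}\la v',P^{c',c}_t(x)v\ra$, whose $t$-derivative at $0$ equals $\la c',c\ra\la v',xv\ra+\la v',\psi_{c',c}(x)v\ra$. On the other hand, direct expansion of the two exponential vectors --- legitimised by the continuity-at-$0$ hypotheses on $t\mapsto k_t(x)\xi$ and $t\mapsto k_t(x)^{*}\xi'$ --- produces a sum of four scalar contributions whose $t$-derivative at $0$ is manifestly sesquilinear-affine in $(c',c)$. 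Matching the two expressions forces the required structural relations on $\psi$, and hence produces a bona fide sesquilinear $\phi\in SL(\wh{D'},\wh{D};B(\YVil))$ with $\phi^{\wh{c'}}_{\wh{c}}=\psi_{c',c}$.

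With $\phi$ in hand, the weak QSDE follows. Fix $\xi=v\ve(g)\in\Ddense$, $\xi'=v'\ve(g')\in\againDdense$ and $t\in\Rplus$. Let $0=t_0<\cdots<t_{n+1}=t$ include all discontinuities of $g|_{[0,t[}$ and $g'|_{[0,t[}$, and denote by $c_i,c_i'$ the values of $g,g'$ on $[t_i,t_{i+1}[$. Adaptedness yields $\la\xi',k_t(x)\xi\ra=e^{\la g',g\ra}\la v',k^{g',g}_t(x)v\ra$, while the semigroup decomposition (Lemma~\ref{sgp decomp lem}) $k^{g',g}_s=k^{g',g}_{t_i}\circ P^{c_i',c_i}_{s-t_i}$ on $[t_i,t_{i+1}]$ makes $s\mapsto k^{g',g}_s(x)$ piecewise $C^1$ with derivative $k^{g',g}_s(\psi_{c_i',c_i}(x))=k^{g',g}_s(\phi^{\wh{g'}(s)}_{\wh{g}(s)}(x))$ off the breakpoints. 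Integrating in $s$ and re-applying adaptedness produces~\eqref{w soln} with $\kappa=\id_{\YVil}$. The principal obstacle is thus the second paragraph --- extracting the sesquilinear structure from the semigroup generators --- and the continuity-at-$0$ hypotheses on $k$ and its adjoint are precisely what is needed for it.
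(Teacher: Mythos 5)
Your overall strategy coincides with the paper's: define $\phi$ from the generators $\psi_{c',c}$ of the associated semigroups by inclusion--exclusion (your four pieces assemble into exactly the paper's formula $\phi^{\binom{z'}{c'}}_{\binom{z}{c}}=\ol{(z'-1)}(z-1)\psi_{0,0}+\ol{(z'-1)}\psi_{0,c}+(z-1)\psi_{c',0}+\psi_{c',c}$), reduce everything to proving that this $\phi$ is genuinely sesquilinear, and then read off the weak QSDE from the semigroup decomposition. Your final paragraph is correct and is essentially what the paper does. The gap is the middle step: the claim that a ``direct expansion of the two exponential vectors'' yields four contributions whose $t$-derivative at $0$ is \emph{manifestly} sesquilinear-affine in $(c',c)$ is not substantiated, and under either natural reading it fails as stated.

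If the expansion means $\ve(c_{[0,t[})=\ve(0)+(\ve(c_{[0,t[})-\ve(0))$ (and likewise for $c'$), the four contributions have derivatives at $0$ equal to $\la v',\psi_{0,0}(x)v\ra$, $\la v',(\psi_{0,c}-\psi_{0,0})(x)v\ra$, $\la v',(\psi_{c',0}-\psi_{0,0})(x)v\ra$ and $\la v',(\psi_{c',c}-\psi_{0,c}-\psi_{c',0}+\psi_{0,0})(x)v\ra$; their (conjugate-)linearity in $(c',c)$ is precisely the assertion to be proved, so the argument is circular. If instead it means the chaos expansion $\ve(c_{[0,t[})=(1,c_{[0,t[},(2!)^{-1/2}c_{[0,t[}^{\ot 2},\ldots)$, so that linearity in $c$ is visible in the degree-$\le 1$ components, then two obstacles remain: $k_t(x)$ is defined only on the exponential domain, not on individual chaos components; and the degree-$n$ components for $n\ge 2$ have norm of order $t^{n/2}$, so after dividing by $t$ the $n=2$ term is merely bounded, not negligible --- its contribution to the derivative does not vanish for free. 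The paper's proof is exactly the device that removes both obstacles: to get linearity in the second argument it moves the operator entirely onto the primed side, writing the relevant quantity as $t^{-1}\la\alpha(t),\beta(t)\ra$ with $\alpha(t)=(k^\dagger_t(x^*)-x^*\ot 1)\bigl(v'\ot\{(z'-1)\ve(0)+\ve(c'_{[0,t[})\}\bigr)$ and $\beta(t)=v\ot\{(z-1)\ve(0)+\ve(c_{[0,t[})\}$; the nonlinearity defect of $\beta$ lives only in chaos degrees $\ge 2$, so $t^{-1}$ times it stays bounded, and it is paired against $\alpha(t)\to 0$ --- which is where the continuity of $t\mapsto k_t(x)^*\xi'$ actually enters. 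Conjugate-linearity in the first argument then requires the symmetric argument with $k$ and $k^\dagger$ exchanged. You correctly sense that both continuity hypotheses are what is needed, but the mechanism by which they enter --- pairing a bounded-but-not-small nonlinearity defect against a difference tending to zero --- is the heart of the proof and is absent from your sketch; as written, the proposal does not establish sesquilinearity of $\phi$.
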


\begin{proof}
Define a map as follows
\[
\phi : \wh{D'} \times \Dhat \to B(\YVil),
\left( \binom{z'}{c'} , \binom{z}{c} \right) \mapsto
\begin{bmatrix} \ol{z'-1} & 1 \end{bmatrix}
\begin{bmatrix} \psi_{0,0} & \psi_{0,c} \\ \psi_{c',0} & \psi_{c',c}
\end{bmatrix}
\begin{bmatrix} z-1 \\ 1 \end{bmatrix}
\]
where \{$\psi_{c',c}:c'\in D', c\in D$\} are the generators of $k$'s 
associated semigroups and, for $x \in \YVil$,
let $\phi (x)$ denote the corresponding map $\wh{D'} \times \wh{D} \to 
\YVil$.
 Markov-regularity implies that $l^{g',g}$, given by~\eqref{lfg},
satisfies
\[
l^{g',g}_t = \id_{\YVil} + \int_0^t ds \, l^{g',g}_s \circ \psi_{c',c},
\]
where $c'=g'(t_-)$ and $c=g(t_-)$.
But, by the semigroup decomposition, $l^{g',g}=k^{g',g}$; since
$\phi^{\wh{c'}}_{\chat} = \psi_{c',c}$ it therefore suffices only to prove
that $\phi$ is sesquilinear.

Accordingly, fix $v' \in \init' , v \in \init $ and $ x \in \YVil$ and 
note the
identity
\[
\big\la v', \phi^{\zeta'}_{\zeta}(x) v\big\ra =
\lim_{t \to 0^+} t^{-1}
\big\la \alpha (t) , \beta (t) \big\ra
\]
where $\zeta' = \binom{z'}{c'}\in\wh{D'}$, $\zeta = \binom zc \in\Dhat$,
\begin{align*}
\alpha (t) &= \big( k^\dagger_t (x^*) - x^* \ot 1 \big)
\Big( v' \ot \big\{ (z'-1) \ve (0) + \ve (c'_{[0,t[}) \big\} \Big)
\text{ and} \\
\beta (t) &= v \ot \big( z,c_{[0,t[} , (2!)^{-1/2} (c_{[0,t[})^{ \ot 2} ,
\ldots \big) ,
\end{align*}
Thus if $\zeta = \zeta_1 +\lambda \zeta_2$
for $\zeta_i = \binom{z_i}{c_i} \in \Dhat$ $(i=1,2)$ and $\lambda \in
\Comp$ then
\[
\big\la v', \big(
\phi^{\zeta'}_\zeta(x) -
\phi^{\zeta'}_{\zeta_1}(x)
- \lambda
\phi^{\zeta'}_{\zeta_2}(x)
\big) v \big\ra
= \lim_{t \to 0^+} \big\la \alpha (t) , \gamma (t) \big\ra
\]
where
\[
\gamma (t) =
t^{-1} v\ot \big( (n!)^{-1/2} \big\{ c^{\ot n} -(c_1)^{\ot n}
- (\lambda c_2)^{\ot n} \big\}
\ot 1_{[0,t[^n} \big)_{n \geq 2} .
\]
Since $\gamma$ is locally bounded and $\alpha (t) \to 0$ as $t \to 0$, by
the continuity of the process $k^\dagger$, this shows that $\phi(x)$ is
linear in its second argument. A very similar argument, in which
the roles of $k$ and $k^\dagger$ are exchanged, shows that $\phi(x)$ is
conjugate linear in its first argument. The result follows.
\end{proof}

\begin{rems}
In view of Corollary~\ref{uniqueness cor}, $k$ is the \emph{unique} linear
$(\againDdense, \Ddense)$-weak solution of~\eqref{id QSDE}.
In particular, if either
\begin{alist}
\item
$\phi\in L\big(\Dhat; \kB(\YVil;\YVMotk \big)$, or
\item
$\YVil$ is finite dimensional and
$\phi\in L\big( \YVil;\YVil\ulot \Op (\Dhat ) \big)$,
\end{alist}
then $k=k^\phi$ and so
satisfies the equation \emph{strongly}.
If $\YVil$ is a $C^*$-algebra and $k$ is completely positive and 
contractive
then (a) holds (by \cite{LWjfa}, Theorem 5.4 and \cite{LWblms}, Theorem 2.4);
it also holds if $\noise$ is finite dimensional.
\end{rems}

We next identify a necessary and sufficient condition for (b) to hold. To
this end let $\QSCHc (\YVil : \Ddense,\againDdense)$
denote the collection of
cocycles $k\in\QSC (\YVil : \Ddense,\againDdense)$ for which
\begin{equation} \label{Holder}
k_{t,|\ve\ra}(x) \text{ is bounded and }
s\mapsto k_{s,|\ve\ra}(x)\in\VMotF
\text{ is H\"older } \tfrac{1}{2}\text{-continuous at } 0
\end{equation}
$(t\in\Rplus, \ve\in\ExpsD, x\in\YVil)$. Let
$\QSCddagger(\YVil : \Ddense,\againDdense)$ denote the set of processes
$k\in\Procdagger(V:\Ddense,\againDdense)$ such that both
$k$ and $k^{\dagger}$ satisfy~\eqref{Holder}.

\begin{lemma}\label{B lem}
Let $k\in\QSCddaggerHc(\YVil:\Ddense,\againDdense)$ be Markov-regular,
with resulting $\phi$ \tu{(}from Theorem~\ref{cocycle thm}\tu{)} viewed
as a linear map $\YVil\to SL(\wh{D'},\wh{D};\YVil)$.
Then, for all $x\in\YVil$, $\phi(x)$ is separately continuous in each
argument.
\end{lemma}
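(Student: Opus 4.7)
The plan is to reduce the separate continuity of $\phi(x)$ to norm continuity of the generator map $c\mapsto\psi_{c',c}(x)$. Unpacking the matrix-form definition of $\phi$, one has
\[
\phi^{\binom{z'}{c'}}_{\binom{z}{c}}(x) = \overline{(z'-1)}(z-1)\psi_{0,0}(x) + \overline{(z'-1)}\psi_{0,c}(x) + (z-1)\psi_{c',0}(x) + \psi_{c',c}(x),
\]
so the dependence on $(z,c)$ for fixed $\zeta'=\binom{z'}{c'}$ is linear in $z$ and enters $c$ only through $\psi_{0,c}(x)$ and $\psi_{c',c}(x)$. Separate continuity of $\phi(x)$ therefore reduces to showing that $c\mapsto\psi_{c',c}(x)$ is norm continuous $D\to\YVil$ for each $c'\in D'\cup\{0\}$ and, symmetrically, that $c'\mapsto\psi_{c',c}(x)$ is norm continuous $D'\to\YVil$ for each $c\in D\cup\{0\}$.

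For continuity in $c$ (with $c'$ fixed) I shall use the limit identity from the proof of Theorem~\ref{cocycle thm}, specialised to $z=z'=1$:
\[
\la v',\psi_{c',c}(x)v\ra = \lim_{t\to 0^+}t^{-1}\la \alpha(t),\beta_c(t)\ra,
\]
with $\alpha(t) = [k^\dagger_t(x^*) - x^*\ot I](v'\ot \ve(c'_{[0,t[}))$ (independent of $c$) and $\beta_c(t) = v\ot \ve(c_{[0,t[})$. Subtracting the analogous identity at a nearby $\tilde c$ expresses $\la v',(\psi_{c',c}(x)-\psi_{c',\tilde c}(x))v\ra$ as the limit of $t^{-1}\la \alpha(t),\beta_c(t)-\beta_{\tilde c}(t)\ra$. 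I then combine two bounds via Cauchy--Schwarz: firstly, $\|\alpha(t)\|\leq C(x,c',T)\|v'\|\sqrt t$ for $t$ small, and secondly, $\|\beta_c(t)-\beta_{\tilde c}(t)\|\leq C'\|v\|\sqrt t\|c-\tilde c\|$ for $c,\tilde c$ in a bounded set and $t$ small, the latter by direct calculation from $\|\ve(g)\|^2=e^{\|g\|^2}$. The product of these bounds is $O(t\|c-\tilde c\|)$; dividing by $t$, passing to the limit, and taking the supremum over unit $v,v'$ yields the local Lipschitz estimate $\|\psi_{c',c}(x)-\psi_{c',\tilde c}(x)\|\leq CC'\|c-\tilde c\|$, hence the desired continuity. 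The continuity in $c'$ proceeds symmetrically, using the Hölder condition on $k$ instead of $k^\dagger$ to bound the newly moving factor $\|(k_t(x)-x\ot I)(v\ot \ve(c_{[0,t[}))\|$ and the exponential-difference estimate on $\|\ve(c'_{[0,t[})-\ve(\tilde c'_{[0,t[})\|$.

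The main obstacle---and the place where the full $\QSCddaggerHc$ hypothesis is used---is the first estimate, $\|\alpha(t)\|\leq C\sqrt t$. The Hölder-$\tfrac12$ condition~\eqref{Holder} is formulated for a \emph{fixed} exponential vector in $\Exps_{D'}$, whereas $\alpha(t)$ features the time-varying exponential $\ve(c'_{[0,t[})$. My plan to circumvent this is to fix $T>0$, apply the Hölder bound at the fixed vector $\ve(c'_{[0,T[})\in\Exps_{D'}$, and then exploit adaptedness of $k^\dagger$ together with the Fock-space factorisation $\FFock = \FFock_{[0,t[}\ot\FFock_{[t,T[}\ot\FFock_{[T,\infty[}$ to split off the $\FFock_{[t,T[}$-component $\ve(c'_{[t,T[})$ (of norm $\geq 1$), thereby transferring the Hölder estimate at the fixed $\ve(c'_{[0,T[})$ to the required uniform-in-$t$ estimate at the moving $\ve(c'_{[0,t[})$.
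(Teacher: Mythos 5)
Your argument is correct and is essentially the paper's: both prove separate continuity by reducing to norm estimates on differences of the associated-semigroup generators, obtained via Cauchy--Schwarz from the H\"{o}lder-$\tfrac12$ bound on the process increment (giving $O(\sqrt t\,)$) and the explicit norm $O(\sqrt t\,\|c-\tilde c\|)$ of a difference of exponential vectors of indicator type, then dividing by $t$ and passing to the limit. The only deviation is that the paper arranges the pairing so that the process increment always acts on a \emph{fixed} exponential vector (e.g.\ $\ve(e_{[0,1[})$, putting the whole $t$-dependence of the test vector into the bra as $\ve(c'_{[0,t[})-\ve(0)$), so that \eqref{Holder} applies verbatim, whereas you apply it to the moving vector $\ve(c'_{[0,t[})$ and recover the uniform-in-$t$ estimate by the adaptedness/factorisation step you describe --- a valid, if avoidable, extra step.
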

\begin{proof}
Fix $x \in \YVil$ and let $\zeta' = \binom{z'}{c'} \in \wh{D'}$ and
$\zeta = \binom zc \in \Dhat $.
Then, in terms of the generators of the associated semigroups,
$\phi^{\zeta'}_\zeta(x)$ equals
\[
\ol{z'} \big\{ (z-1) \psi_{0,0} (x) + \psi_{0,c} (x) \big\} +
(z-1)\big\{ \psi_{c',0} (x) - \psi_{0,0} (x) \big\} +
\big\{ \psi_{c',c} (x) - \psi_{0,c} (x) \big\}
\]
and, for each $v' \in \init' , e \in D$ and $v \in \init$, setting $C(x,e)
=
\sup \big\{ t^{-1/2} \| k_{t,| \ve \ra} (x) -
x \ot | \ve \ra \| :t \in ]0,1[ \big\}$
where $\ve = \ve (e_{[0,1[})$,
\begin{align*}
\lefteqn{ \big|
\big\la v', (\psi_{c',e} (x) - \psi_{0,e} (x) ) v \big\ra
\big| } \\
 & \quad = \lim_{t \to 0^+} t^{-1} e^{-t \la c',e \ra}
     \Big| \big\la v' \ot \{ \ve (c'_{[0,t[}) - \ve (0) \big\} ,
     \big( k_t (x) - x \ot 1 \big) v \ot \ve (e_{[0,1[}) \big\ra \Big| \\
 & \quad \leq \| v' \| \, \| c' \| C(x,e) \| v \| .
\end{align*}
Thus $\| \psi_{c',e} (x) - \psi_{0,e} (x) \| \leq \| c' \| C(x,e)$. It
follows that
\begin{align*}
\lefteqn{ \| \phi^{\zeta'}_\zeta(x) \|} \\
& \quad \leq |z'| \big\| (z-1) \psi_{0,0} (x) + \psi_{0,c} (x) \big\|
     + | z-1 | \, \| c'\| C(x,0) + \| c'\| C(x,c) \\
& \quad \leq \| \zeta' \| M(\zeta , x) ,
\end{align*}
where $M(\zeta ,x)$ is a constant independent of $\zeta'$. Thus the
sesquilinear map $\phi(x)$ is continuous in its first argument. Again
applying the above argument to $k^\dagger$ yields continuity in the second
argument.
\end{proof}

\begin{rem}
If $\YVil$ is finite dimensional then the continuity assumption
introduced in
\eqref{Holder} is equivalent to H\"older-continuity at $0$ of the map
\[
s\mapsto k_{s,|\ve\ra}\in B(\YVil;\YVMotF)
\quad
(\ve\in\ExpsD).
\]
If $\init$ is finite dimensional then
this further reduces to the pointwise strong continuity condition
\[
s\mapsto k_s(x) \xi \in \init' \ot \FFock
\text{ is H\"older } \tfrac{1}{2}\text{-continuous at } 0
\quad
(x \in \YVil, \xi \in \Ddense).
\]
We alert the reader to the fact that not all
finite dimensional operator spaces can be concretely
realised in $B(\Hil)$,
in the sense of a completely isometric embedding,
for a finite dimensional Hilbert space $\Hil$.
For more on this point, and for details of
an example given by the operator space spanned by the canonical unitary
generators of the universal $C^*$-algebra of a free group
$\mathbb{F}_n$ ($n \geq 3$), we refer to~\cite{Pisier}.
\end{rem}

\begin{thm}
\label{f.d. cocycle propn}
Let $k \in \QSCddaggerHc (\YVil : \Ddense, \againDdense)$ and suppose that
$\YVil$ is finite dimensional. Then there is
$\phi \in L \big( \YVil;\YVil \ulot \Opdagger (\Dhat , \wh{D'}) \big)$ 
such
that $k = k^\phi$.
\end{thm}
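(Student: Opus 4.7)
The plan is to reduce to Theorem~\ref{cocycle thm}: first establish Markov-regularity of $k$, then upgrade the resulting sesquilinear form $\phi$ to an $\Opdagger$-valued object, and finally invoke localisable existence together with weak-solution uniqueness. For Markov-regularity, the assumption $k\in\QSCddaggerHc$ gives H\"older-$1/2$ continuity at $0$ of $s\mapsto k_{s,|\ve\ra}(x)\in\VMotF$ at each \emph{fixed} $\ve\in\ExpsD$, and likewise for $k^\dagger$. To lift this to norm continuity of each associated semigroup $P^{c',c}_t:=k^{c',c}_t$ I would apply adaptedness to $k_t$ with test functions $c'_{[0,1[}$ and $c_{[0,1[}$: for $t\le 1$ the $t$-dependence of $\la v', P^{c',c}_t(x) v\ra$ sits entirely in the action of $k_t(x)$ on the \emph{fixed} vector $v\ot\ve(c_{[0,1[})$, so the H\"older estimate at $\ve(c_{[0,1[})$ yields $\|P^{c',c}_t(x)-x\|=O(t^{1/2})$ for each $x\in\YVil$. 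Since $\YVil$ is finite-dimensional, pointwise norm convergence upgrades to operator-norm convergence, giving norm continuity and bounded generators for every $P^{c',c}$; together with the immediate initial-space boundedness of $k$, this makes $k$ Markov-regular.

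With Markov-regularity in hand, Theorem~\ref{cocycle thm} produces $\phi\in SL(\wh{D'},\Dhat; B(\YVil))$ such that $k$ is a $(\againDdense,\Ddense)$-weak solution of~\eqref{id QSDE} --- the pointwise-strong continuity at $0$ required of $k$ and $k^\dagger$ is an immediate consequence of the H\"older hypothesis --- and Lemma~\ref{B lem} then guarantees that for each $x\in\YVil$ the sesquilinear form $\phi(x)$ is continuous separately in each argument. Fixing a basis $\{e_i\}$ of $\YVil$ with dual functionals $\{\omega_i\}\subset\YVil^*$, each scalar form $(\zeta',\zeta)\mapsto\omega_i(\phi^{\zeta'}_{\zeta}(x))$ is sesquilinear and separately continuous on $\wh{D'}\times\Dhat$; by Riesz representation in the second variable, with separate continuity in the first variable supplying the adjoint, this form corresponds to a linear operator $T_i(x)\in\Op(\Dhat;\kilhat)$ satisfying $\wh{D'}\subset\Dom T_i(x)^*$, i.e., $T_i(x)\in\Opdagger(\Dhat,\wh{D'})$. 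Assembling, $\phi(x)=\sum_i e_i\ot T_i(x)$ exhibits $\phi$ as an element of $L\big(\YVil;\YVil\ulot\Opdagger(\Dhat,\wh{D'})\big)$.

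Finally, since $\YVil$ is finite-dimensional, $\phi$ is trivially finitely localisable, so Theorem~\ref{local existence} applied with $\kappa=\id_{\YVil}$ yields a $\Ddense$-strong (hence weak) solution $k^\phi$ of~\eqref{id QSDE}, and the weak-solution uniqueness of Corollary~\ref{uniqueness cor} identifies $k=k^\phi$. The hard part will be the first step: the H\"older bound is supplied only at each fixed $\ve\in\ExpsD$, with a constant depending on $\ve$, whereas the natural expression for $P^{c',c}_t$ involves the $t$-dependent vector $\ve(c_{[0,t[})$; the adaptedness trick above is what decouples these two dependencies, letting a single fixed test vector control $P^{c',c}_t$ uniformly in small $t$.
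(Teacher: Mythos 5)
Your proposal is correct and follows essentially the same route as the paper: Markov-regularity from the H\"older hypothesis plus finite dimensionality, Theorem~\ref{cocycle thm} to produce the sesquilinear $\phi$, Lemma~\ref{B lem} and Riesz representation componentwise to land in $\YVil\ulot\Opdagger(\Dhat,\wh{D'})$, and then identification of $k$ with $k^\phi$. The only (immaterial) divergences are that you spell out the adaptedness argument behind Markov-regularity, which the paper asserts without detail, and you close via Corollary~\ref{uniqueness cor} rather than by matching associated semigroups through Corollary~\ref{H cor} and Theorem~\ref{H thm} --- a variant the paper itself sanctions in the remark following Theorem~\ref{cocycle thm}.
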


\begin{proof}
Note first that, since $\YVil$ is finite dimensional, the continuity
assumption implies that $k$ is Markov-regular. Let
$\phi\in L(\YVil; SL(\wh{D'},\wh{D};\YVil))$ be the map resulting from
Theorem~\ref{cocycle thm}.
Choose an ordered basis $\{ x_1, \ldots , x_n \}$ of $\YVil$ and for
$x \in \YVil$, $\zeta' \in \wh{D'}$
and $\zeta \in \Dhat$, let
$\phi^{\zeta'}_\zeta(x)^i$,
$i = 1, \ldots , n$, denote the components of $\phi^{\zeta'}_\zeta(x)$,
with respect to this basis. By Lemma~\ref{B lem} each functional
$\phi(x)^i : \wh{D'} \times \Dhat \to \Comp$ is sesquilinear and
continuous in each argument; it is therefore
given by an operator $\phi^{(i)}(x) \in \Opdagger (\Dhat , \wh{D'})$:
\[
\phi^{\zeta'}_\zeta(x)^i = \la \zeta' , \phi^{(i)}(x) \zeta \ra
\qquad (\zeta'\in\wh{D'}, \zeta\in\Dhat).
\]
Moreover, each map $x \mapsto \phi^{(i)}(x)$ is clearly linear.
Thus, setting
\[
\phi (x) = \sum^n_{i=1} x_i \ot \phi^{(i)}(x)
\]
defines a linear map
$\phi : \YVil \to \YVil \ulot \Opdagger (\Dhat , \wh{D'})$.
Therefore, by Corollary~\ref{H cor}, $\phi$ generates a stochastic
cocycle.
In view of the identity
\[
(\omega_{\wh{c'} , \chat} \fullcomp \phi) (x) =
\sum^n_{i=1} \phi^{\wh{c'}}_{\chat}(x)^i x_i =
\phi^{\wh{c'}}_{\chat}(x) = \psi_{c',c}(x)
\]
and Theorem~\ref{H thm}, $k$ has the same associated semigroups as
the cocycle $k^\phi$. Thus $k = k^\phi$ and the proof is complete.
\end{proof}

By \emph{finite localisability} for a process
$k\in\Proc(\YVil\to\YVil: \Ddense,\againDdense)$ we mean
finite localisability for each $k_t$.
Combining the above result with Corollary~\ref{uniqueness cor} and
Theorem \ref{more local existence}, straightforward
localisation arguments allow us to summarize the new results of this 
section
as follows.

\begin{cor}  \quad
\begin{alist}
\item \label{QGC a}
Let $\phi \in L\big( \YVil ; \YVil \ulot \Op (\Dhat)\big)$ be finitely
localisable. Then
$k^\phi \in \mathbb{QSC}_{\mathrm{Hc}}(\YVil : \Ddense , \againDdense)$
and is finitely localisable, moreover if 
$\phi\in L\big( \YVil ; \YVil \ulot \Op^{\ddagger} (\Dhat,\wh{D'})\big)$
then
$k^\phi \in \mathbb{QSC}^\ddagger_{\mathrm{Hc}}(\YVil : \Ddense , 
\againDdense)$.

\item \label{QGC b}
Conversely, let
$k \in \mathbb{QSC}^\ddagger_{\mathrm{Hc}}(\YVil :\Ddense , \againDdense)$
be finitely localisable. Then there is a finitely localisable map
$\phi\in L\big( \YVil ; \YVil \ulot \Op^{\ddagger} (\Dhat,\wh{D'})\big)$
such that $k = k^\phi$.
\end{alist}
\end{cor}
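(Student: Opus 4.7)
The two parts amount, respectively, to an existence and characterisation statement, and both reduce to the finite-dimensional case already handled by Theorem~\ref{f.d. cocycle propn}, together with the localisation machinery developed in Theorem~\ref{local existence}/\ref{more local existence} and the uniqueness result Corollary~\ref{uniqueness cor}. The plan is therefore to carry out a ``patching'' argument over finite-dimensional localising subspaces.

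For (\ref{QGC a}), given finitely localisable $\phi \in L\big(\YVil;\YVil\ulot\Op(\Dhat)\big)$, Theorem~\ref{more local existence} (applied with $\kappa = \id_{\YVil}$) produces the $\Ddense$-strong solution $k^\phi$, and its property~(\ref{lex d}) in Theorem~\ref{local existence} gives $k^\phi_t(V_1) \subset V_1 \ulot \Op(\ExpsD)$ for every finite-dimensional localising $V_1$, so $k^\phi$ is finitely localisable. To verify the cocycle identity~\eqref{cocycle reln} at a vector $x \in \YVil$, I would fix a finite-dimensional subspace $V_1 \ni x$ localising $\phi$ and observe that, by uniqueness (Corollary~\ref{uniqueness cor}), $k^\phi$ restricted to $V_1$ coincides with the process $k^{\phi_1}$ produced by Theorem~\ref{existence} from $\phi_1 := \phi|_{V_1}$; Corollary~\ref{H cor} then tells us $k^{\phi_1}$ is a Markov-regular cocycle, so \eqref{cocycle reln} holds on $V_1$ and hence at $x$. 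The H\"older-$\tfrac12$ condition at $0$ required for membership in $\QSCHc$ is precisely Theorem~\ref{existence}(\ref{ex b}) applied to each $k^{\phi_1}$. Finally, when $\phi$ lands in $\YVil\ulot\Op^{\ddagger}(\Dhat,\wh{D'})$, Corollary~\ref{algebraic conjugate} gives $(k^\phi)^\dagger = k^{\phi^\dagger}$, and the same argument applied to $\phi^\dagger$ yields the H\"older condition for the conjugate process, placing $k^\phi$ in $\QSCddaggerHc$.

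For (\ref{QGC b}), given a finitely localisable $k \in \QSCddaggerHc(\YVil : \Ddense, \againDdense)$, I would pick for each $x \in \YVil$ a finite-dimensional subspace $V_1 \ni x$ localising all of the maps $k_t$ (so that $k_t(V_1) \subset V_1 \ulot \Op(\ExpsD)$ for every $t$). Endowing $V_1$ with any operator-space structure makes $k|_{V_1}$ a Markov-regular cocycle in $\QSCddaggerHc(V_1 : \cdots)$, so Theorem~\ref{f.d. cocycle propn} yields a map $\phi_1 \in L\big(V_1 ; V_1 \ulot \Opdagger(\Dhat,\wh{D'})\big)$ with $k|_{V_1} = k^{\phi_1}$. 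For $V_1 \subset V_2$ both localising, uniqueness of weak solutions on $V_1$ (Corollary~\ref{uniqueness cor}, applicable because $V_1$ is finite-dimensional) forces $\phi_2|_{V_1} = \phi_1$, so the local generators patch to a globally defined $\phi \in L\big(\YVil ; \YVil \ulot \Opdagger(\Dhat,\wh{D'})\big)$. Each $V_1$ now automatically localises $\phi$, so $\phi$ is finitely localisable, and $k = k^\phi$ by the defining property on each $V_1$.

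The delicate step, and the one I would take most care with, is the selection of finite-dimensional subspaces localising $k_t$ \emph{uniformly in $t$} in part (\ref{QGC b}). The Holder continuity together with Markov-regularity of the associated semigroups (deduced as in Theorem~\ref{cocycle thm} and Lemma~\ref{B lem}) guarantees that any subspace invariant under a single $k_{t_0}$ with $t_0 > 0$ small is invariant under every semigroup generator $\psi_{c',c}$, hence under each $P^{c',c}_s$, hence under each $k^{g',g}_t$ via the semigroup decomposition~\eqref{sgp decomp}; this is what converts the pointwise-in-$t$ finite localisability into the uniform version needed for Theorem~\ref{f.d. cocycle propn}. Once this point is handled the rest is routine.
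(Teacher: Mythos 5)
Your overall strategy --- patching Theorem~\ref{f.d. cocycle propn}, Theorem~\ref{local existence} and Corollary~\ref{uniqueness cor} over finite-dimensional localising subspaces --- is exactly the ``straightforward localisation argument'' the paper invokes, and your treatment of part (a) and of the consistency/patching step in part (b) is correct (for part (a) you could equally cite Theorem~\ref{local existence} directly, since $\kappa=\id_{\YVil}$ takes values in the operator space $\YVil$).

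The flaw is in your final paragraph. You claim that a finite-dimensional subspace $V_1$ invariant under a \emph{single} $k_{t_0}$ (equivalently, under each $P^{c',c}_{t_0}$) is invariant under every generator $\psi_{c',c}$. This is false: a subspace invariant under $P_{t_0}$ for one value of $t_0$ need not be invariant under the generator --- consider a periodic semigroup $P_t=e^{tA}$ with $P_{t_0}=\id$ but $A\neq 0$; every subspace is invariant under $P_{t_0}$, while only the $A$-invariant ones are invariant under the generator, and neither H\"older continuity nor Markov-regularity repairs this. What your chain of implications actually requires is invariance of $V_1$ under $k_{t_n}$ for some sequence $t_n\downarrow 0$: then $\psi_{c',c}(y)=\lim_n t_n^{-1}\big(P^{c',c}_{t_n}(y)-y\big)$ lies in $V_1$ because a finite-dimensional subspace is closed, and the rest of the argument (generator invariance $\Rightarrow$ invariance under all $P^{c',c}_s$ $\Rightarrow$ invariance under all $k^{g',g}_t$ via the semigroup decomposition~\eqref{sgp decomp}) goes through. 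The residual difficulty is that the paper's definition of finite localisability of a process (``finite localisability for each $k_t$'') only supplies, for each $t$ separately, some localising subspace containing $x$; neither intersections nor sums of subspaces localising \emph{different} maps localise anything, so a single $V_1$ working for a sequence $t_n\downarrow 0$ does not follow formally. You should either read the hypothesis as providing a common system of finite-dimensional subspaces localising every $k_t$ simultaneously (which is what holds in the coalgebra application via the Fundamental Theorem on Coalgebras, and is surely what is intended), or record this as an explicit strengthening of the hypothesis; the single-$t_0$ shortcut does not close the gap.
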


\subsection{Application to coalgebraic cocycles}
Theorem \ref{f.d. cocycle propn} yields an alternative proof of the
principal implication in Theorem 5.8 of \cite{LSaihp} which states
that if $\Clg$ is a coalgebra with coproduct $\Delta$ and counit $\counit$,
then any H\"older-continuous quantum stochastic \emph{convolution cocycle}
$l\in \Procdagger (\Clg \to \Comp;\ExpsD,\Exps_{D'})$,
with H\"older-continuous conjugate, satisfies a coalgebraic quantum stochastic
differential equation
\begin{equation}\label{coalg QSDE}
dl_t = l_t\star_{\tau} d\Lambda_{\varphi}(t),  \quad
l_0 = \iota\circ\counit,
\end{equation}
for some map $\varphi\in L(\Clg;\Op^{\ddagger}(\Dhat,\wh{D'}))$. We end 
with a sketch of a proof of this. The Fundamental Theorem on Coalgebras 
and localisation arguments allow us to effectively assume that $\Clg$ is 
finite dimensional.
Assuming this, linearly embed $\Clg$ into $B(\init)$, for some
(finite dimensional) Hilbert space $\init$,
and observe that the process $k\in
\Procdagger(\Clg \to \Clg;\init\ulot\ExpsD,\init\ulot\Exps_{D'})$,
defined by the formula
\begin{equation}
k_t = (\id_{\Clg} \ulot l_t) \circ \Delta
\quad
(t\geq 0),
\label{conv1}
\end{equation}
is a H\"older-continuous quantum stochastic cocycle on $\Clg$.
Theorem \ref{f.d. cocycle propn} then implies that $k$
satisfies the quantum stochastic differential equation~\eqref{id QSDE} for
some
$\phi \in L(\Clg; \Clg \ulot \Op^{\ddagger}(\Dhat,\wh{D'}))$. Set
\begin{equation}
\label{conv2}
\varphi = (\epsilon \ulot \id_{\Op^{\ddagger}(\Dhat,\wh{D'})} )\circ \phi.
\end{equation}
It is then easily checked that the convolution cocycle $l$ satisfies the
coalgebraic quantum stochastic differential equation~\eqref{coalg QSDE}.

\begin{rem}
The idea outlined here, of using correspondences such as \eqref{conv1}
and~\eqref{conv2} for moving between
quantum stochastic cocycles and quantum stochastic convolution cocycles,
or their respective stochastic generators, also works well in
the analytic context of quantum stochastic convolution
cocycles on operator space coalgebras.
This enables application of known results for quantum stochastic cocycles
to the development of a theory of quantum L\'evy processes on compact
quantum groups and the characterisation of their stochastic generators.
This is done in the forthcoming paper \cite{LSqscc2} which also contains 
many examples.
Dilation of completely positive convolution cocycles on 
a $C^*$-bialgebra to $*$-homomorphic convolution cocycles is treated in
\cite{Sdilations}. The main results, in both the algebraic and 
$C^*$-algebraic cases, are summarized in~\cite{LSbedlewo}.
\end{rem}

\end{document}